\newcommand{\hmP}{\widehat{\mathrm{P}}}
\newcommand{\mP}{\mathrm{P}}
\newcommand{\mfu}{\mathbf{u}}
\title{A hybrid HDMR for mixed multiscale finite element methods with
       application to flows in random porous media
  \thanks{L. Jiang and J. D. Moulton acknowledge funding by the
    Department of Energy at Los Alamos National Laboratory under
    contracts DE-AC52-06NA25396 and the DOE Office of Science Advanced
    Computing Research (ASCR) program in Applied Mathematical
    Sciences.
}}
\author{Lijian Jiang\thanks{Applied Mathematics and Plasma Physics,
Los Alamos National Laboratory, NM 87545 ({\tt ljiang@lanl.gov}). Corresponding author.}
\and
J. David Moulton\thanks{Applied Mathematics and Plasma Physics,
Los Alamos National Laboratory, NM 87545 ({\tt moulton@lanl.gov}).}
\and
Jia Wei\thanks{CGGVeritas, Houston, TX 77072 ({\tt weijialily@gmail.com})}
 }
\begin{document}

\maketitle

\begin{abstract}
  Stochastic modeling has become a popular approach to quantify
  uncertainty in flows through heterogeneous porous media. In this
  approach the uncertainty in the heterogeneous structure of material
  properties is often parametrized by a high-dimensional random
  variable, leading to a family of deterministic models.  The
  numerical treatment of this stochastic model becomes very
  challenging as the dimension of the parameter space increases.  To
  efficiently tackle the high-dimensionality, we propose a hybrid
  high-dimensional model representation (HDMR) technique, through
  which the high-dimensional stochastic model is decomposed into a
  moderate-dimensional stochastic model in the most active random
  subspace, and a few one-dimensional stochastic models. The derived
  low-dimensional stochastic models are solved by incorporating the
  sparse-grid stochastic collocation method with the proposed hybrid
  HDMR.  In addition, the properties of porous media, such as
  permeability, often display heterogeneous structure across multiple
  spatial scales. To treat this heterogeneity we use a mixed
  multiscale finite element method (MMsFEM). To capture the non-local
  spatial features (i.e. channelized structures) of the porous media
  and the important effects of random variables, we can hierarchically
  incorporate the global information individually from each of the
  random parameters. This significantly enhances the accuracy of the
  multiscale simulation.  Thus, the synergy of the hybrid HDMR and the
  MMsFEM reduces the dimension of the flow model in both the
  stochastic and physical spaces, and hence, significantly decreases
  the computational complexity.  We analyze the proposed hybrid HDMR
  technique and the derived stochastic MMsFEM.  Numerical experiments
  are carried out for two-phase flows in random porous media to
  demonstrate the efficiency and accuracy of the proposed hybrid HDMR
  with MMsFEM.
\end{abstract}

\begin{keywords}
  Hybrid high-dimensional model representation, Sparse grid  collocation method, Mixed multiscale finite element method, Approximate global information
\end{keywords}

\begin{AMS}
 65N30, 65N15, 65C20
\end{AMS}

\pagestyle{myheadings}

\thispagestyle{plain}
\markboth{L. Jiang, J. D. Moulton and J. Wei}{MMsFEM based on hybrid HDMR}

\section{Introduction}

The modeling of dynamic flow and transport processes in geologic
porous media plays a significant role in the management of natural
resources, such as oil reservoirs and water aquifers. These porous
media are often created by complex geological processes and may
contain materials with widely varying abilities to transmit
fluids. Thus, multiscale phenomena are inherent in these applications
and must be captured accurately in the model. In addition, due to
measurement errors and limited knowledge of the material properties
and external forcing, modeling of subsurface flow and transport often
uses random fields to represent model inputs (e.g.,
permeability). Then the system's behavior can be accurately predicted
by efficiently simulating the stochastic multiscale model.  The
existence of heterogeneity at multiple scales combined with this
uncertainty brings significant challenges to the development of
efficient algorithms for the simulation of the dynamic processes in
random porous media.  As a result, the interest in developing
stochastic multiscale methods for stochastic subsurface models has
steadily grown in recent years.

The existence of uncertainty in random porous media is an important
challenge for simulations. One way to describe the uncertainty is to
model the random porous media as a random field which satisfies
certain statistical correlations. This naturally results in describing
the flow and transport problem using stochastic partial differential
equations (SPDE). Over the last few decades several numerical methods
have been developed for solving SPDEs.  The existing stochastic
numerical approaches roughly fall into two classes: (1) non-intrusive
schemes and (2) intrusive schemes. In non-intrusive schemes, the
existing deterministic solvers are used without any modification to
solve a (large) set of deterministic problems, which correspond to a
set of samples from the random space. This leads to a set of outputs,
which are used to recover the desired statistical quantities.  Monte
Carlo methods \cite{fis96} and stochastic collocation methods fall
into this category.  In contrast, intrusive schemes treat the
approximation of the probabilistic dependence directly in the
SPDE. Thus, discretization leads to a coupled set of equations that
are fundamentally different from discretizations of the original
deterministic model, and hence, raises new challenges for nonlinear
solvers. Typical examples from this class are stochastic
Galerkin \cite{ghs91,epsu09,mk05} and perturbation methods
\cite{zlt00}. A broad survey of these methods can be found in
\cite{lk10}.  Among these methods, stochastic collocation methods have
gained significant attention in the research community because they
share the fast convergence property of the stochastic finite element
methods while having the decoupled nature of Monte Carlo methods. A
stochastic collocation method consists of two components: a set of
collocation points and an interpolation operator. The collocation
points are selected based on specific objectives, and choosing
different objectives or constraints leads to different methods.  Two
popular collocation methods are the full-tensor product method
\cite{bnt07} and the Smolyak sparse-grid method
\cite{bnr00,ntw08,sm63}. Sparse-grid stochastic collocation is known
to have the same asymptotic accuracy as full-tensor product
collocation, but uses significantly fewer collocation points.
Unfortunately, the number of collocation points required in the
sparse-grid method still increases dramatically for high-dimensional
stochastic problems. Hence, application of this method is still limited
to problems in a moderate-dimensional stochastic space.

Due to small correlation lengths in the covariance structure, the
uncertainty in characterizations of porous media are often
parametrized by a large number of random variables, e.g., using a
truncated Fourier type expansion for random fields.  Consequently, the
model's input is defined in a high-dimensional random parameter
space. Sampling in high-dimensional random spaces is computationally
demanding and very time-consuming. If the sampling of random space is
conducted in the full random space through the stochastic collocation
method, then the number of samples increases sharply with respect to
the dimension of the random space. This is the notorious {\it curse of
  dimensionality}, and is a fundamental problem facing stochastic
approximations in high-dimensional stochastic spaces.  Dimension
reduction techniques such as proper orthogonal decomposition,
principal component analysis, reduced basis methods \cite{rht08},
novel random field expansion techniques \cite{jp12, xu011,xu012}  and
high-dimensional model representations (HDMRs) \cite{rass99}, strive
to address this problem. Among these methods, the HDMR is one of the
most promising methods for efficient stochastic dimension reduction in
subsurface applications, and has received considerable attention in
recent years \cite{mz10, mz11, ng08, yclk11}.  The HDMR was originally
developed in the application of chemical models by \cite{rass99}, but
was later cast as a general set of quantitative assessment and
analysis tools for capturing the high-dimensional relationship between
model inputs and model outputs.  HDMR has been used for improving the
efficiency of deducing high-dimensional input-output system behavior,
and can be employed to reduce the computational effort.  In practice,
to avoid dealing with the full high-dimensional random space, a
truncated HDMR technique is used to decompose a high-dimensional model
into a set of low-dimensional models, each of which needs much less
computational effort.  The {\it curse of dimensionality} can be
suppressed significantly by using this approach to HDMR.  However,
some drawbacks exist in the traditional truncated HDMR
\cite{css75,rass99} and related methods such as the adaptive HDMR
\cite{mz10,yclk11}. For example, often too many low-dimensional models
need to be computed, and high-order cooperative effects from important
random variables may be neglected.  To overcome these drawbacks, we
propose a hybrid HDMR, which implicitly uses the complete HDMR in a
moderate-dimensional space and explicitly uses the first-order
truncated HDMR for the remaining dimensions.  Thus the hybrid HDMR
decomposes a high-dimensional model into a moderate-dimensional model
and a few one-dimensional models.  The moderate-dimensional space is
spanned by the most active random dimensions. We use sensitivity
analysis to identify the most active random dimensions. Then each
low-dimensional stochastic model in the hybrid HDMR is solved by using
sparse-grid stochastic collocation method. The proposed hybrid HDMR
renders a good approximation in stochastic space and substantially
improves the efficiency for high-dimensional stochastic models.
Therefore, a good trade-off between computational complexity and
dimension reduction error can be achieved with the hybrid HDMR technique.

Porous media often exhibit complex heterogeneous structures that are
inherently hierarchical and multiscale in nature, and hence, pose a
significant challenge for developing accurate and efficient numerical
methods. Simulating flow in porous media using a very fine grid to
resolve this heterogeneous structure is computationally very expensive
and possibly infeasible. However, disregarding the heterogeneities can
lead to large errors. Thus, many multiscale methods including the MMsFEM
\cite{ch03}, variational multiscale method \cite{hughes98}, two-scale
conservative subgrid approaches \cite{arbogast02} and heterogeneous
multiscale method \cite{ee03}, multiscale finite volume method
\cite{jennylt03}, spectral MsFEM \cite{egw11} (see \cite{eh09}
for more complete references), have been developed over the last few
decades to capture the influence of fine- and multi-scale
heterogeneities in under-resolved simulations.
We note that these multiscale methods share some similarities
\cite{eh09}.  In this paper, we focus on the MMsFEM.  The main idea
behind MMsFEM is to incorporate fine-scale information into the finite
element velocity basis functions, and hence, capture the influence of
these fine scales in a large-scale mixed finite element formulation.
The MMsFEM retains local conservation of velocity flux and has been
shown to be effective for solving flow and transport equations in
heterogeneous porous media.  In many cases, the overhead of computing
multiscale basis functions can be amortized as they can be computed
once for a particular medium, and then reused in subsequent
computations with different source terms and boundary conditions. This
leads to a large computational savings in simulating the flow and
transport process where the flow equation needs to be solved many
times dynamically.  When porous media exhibit non-separable scales,
some global information is needed to represent non-local effects
(e.g., channel, fracture and shale barriers) and is used to construct
the multiscale basis functions.  Using global information can
significantly improve the simulation accuracy in these important
geological settings \cite{aa,aej08,jem10,jml}.  If the global
information is incorporated into MMsFEM, we refer to the MMsFEM as
global MMsFEM (G-MMsFEM). In the paper, we extend the central concept
of the global information to MMsFEM based on the hybrid HDMR.

Combining different multiscale methods and stochastic numerical
approaches yields various stochastic multiscale methods
\cite{gz09,gyz11, gmp10,jml,jp12,mz11,xu07}. These methods
established a general approach to solve problems in multiscale
stochastic media.  Specifically, dimension reduction techniques are
applied, particularly focusing on the multiscale spatial dimension of
these stochastic models, to reduce the overall computational cost.
For example, within the stochastic Galerkin framework several earlier
works (e.g., \cite{ag08,ng08,rht08}) proposed solving an optimization
problem to select a set of optimal basis functions to define a reduced
model. Although, these approaches reduce the resolution of the spatial
dimension, the stochastic dimension is not reduced.  Therefore, the
computational cost of these approaches may still be too expensive in
high-dimensional stochastic spaces.

In this paper, we propose a hybrid HDMR framework, and use the
sparse-grid stochastic collocation method with MMsFEM to develop a
reduced multiscale stochastic model.  Sensitivity analysis is used
to control the reduction of the stochastic dimension.  For the MMsFEM,
we hierarchically utilize approximate global information that is
aligned with the terms of the hybrid HDMR.  Specifically, only the
required basis functions are computed and to amortize their
computational cost they are held constant in time.  Thus, the proposed
multiscale stochastic model reduction approach is able to reduce a
high-dimensional stochastic multiscale model in both stochastic space,
and the resolution of physical space.  Compared with traditional
truncated HDMR techniques, much better efficiency and very good
accuracy are achieved with this hybrid HDMR.  We analyze the proposed
multiscale stochastic model reduction approach and investigate its
application to flows in random and heterogeneous porous media.
Important statistical properties (e.g., mean and variance) of the
outputs of the stochastic flow models are computed and discussed.

The rest of the paper is organized as follows. In Section $2$, we
briefly introduce the background on the flow and transport models in
random porous media.  In Section $3$, we present a general framework
for HDMR and propose the hybrid HDMR technique. Some theoretical
results and computational complexity are also addressed in this
section.  Section $4$ is devoted to presenting MMsFEM integration with
the hybrid HDMR.  In Section $5$, numerical examples using two-phase
flow are presented to demonstrate the performance of the proposed the
hybrid HDMR with MMsFEM.  Finally, some conclusions and closing
remarks are made.

\section{Background and notations}

\subsection{Two-phase flow system and its stochastic parametrization}
Let $D$ be a convex bounded domain in $\mathbb{R}^d$ ($d=2, 3$) and $(\Omega,F, P)$ be a probability space, where $\Omega$ is the set of outcomes, $F$ is the $\sigma$-algebra
generated by $\Omega$, and $P$ is a probability measure.

We consider two-phase flow and transport in a random permeability
field, $k(x,\omega)$. Here the two phases are referred to as water and
oil, and designated by subscripts $w$ and $o$, respectively.  The
equations of two-phase flow and transport (in the absence of gravity
and capillary effects) can be written:
\begin{eqnarray}
-div(\lambda(S) { k(x, \omega)} \nabla p)&=q, \label{pres-eqn}\\
{\partial S\over\partial t}+ div(uf_w(S))&=0, \label{sat-eqn}
\end{eqnarray}
where the total mobility $\lambda(S)$ is given by
$\lambda(S)=\lambda_w(S)+\lambda_o(S)$ and $q$ is a source term.
Here $\lambda_w(S)=k_{rw}(S)/\mu_w$ and  $\lambda_o(S)=k_{ro}(S)/\mu_o$,
where $\mu_o$ and  $\mu_w$ are viscosities of oil and water phases, respectively,
and $k_{rw}(S)$ and $k_{ro}(S)$ are relative permeabilities of oil and water phases, respectively. Here
$f_w(S)$ is the fractional flow of water and  given by $f_w=\lambda_w /
(\lambda_w+\lambda_o)$,
Equation (\ref{pres-eqn}) is the flow equation governing the water
pressure, and (\ref{sat-eqn}) is the transport (or saturation) equation.
According to Darcy's law, the total velocity $u$ in  (\ref{sat-eqn}) is given  by
\begin{equation}
u =u_w + u_o=-\lambda(S) { k } \nabla p. \label{tv-eqn}
\end{equation}

A random field may be parameterized by a Fourier type expansion, such as the expansion of Karhunen-Lo\`{e}ve (ref.\cite{Lo77}), polynomial chaos or wavelet \cite{lk10}.
This often gives rise to an infinite-dimensional random space. For computation, we truncate such an expansion to approximate the random field. Then $k(x, \omega)$ can be formally described by
\begin{equation}
\label{approx-random}
k(x, \omega)\approx k\big(x, \theta_1(\omega), \theta_2(\omega),\dots, \theta_N(\omega)\big).
\end{equation}
For example, if a random field is characterized by a covariance
structure, then the random field can be approximated by a finite sum
of uncorrelated random variables through a truncated Karhunen-Lo\`{e}ve
expansion (KLE).  To obtain an accurate approximation, a large number
$N$ of random parameters is required in (\ref{approx-random}). This
leads to a family of deterministic models in a high-dimensional random
parameter space.

To simplify the presentation, we make the following assumption,
\[
k(x, \omega)=k(x, \Theta), \quad \text{ where $\Theta:=\Theta(\omega)=\big(\theta(\omega_1),\theta_2(\omega),\dots, \theta_N(\omega)\big)\in \mathbb{R}^N$}.
\]
Let $I^N\subset \mathbb{R}^N$ be  the image of $\Theta$, i.e., $I^N=\Theta(\Omega)$, and $\rho(\Theta)=\Pi_{i=1}^N \rho_i(\theta_i)$ be the joint probability function of $(\theta_1,\cdots, \theta_N)$.
By equations (\ref{pres-eqn}),  (\ref{sat-eqn}),  (\ref{tv-eqn}) and parametrization of permeability field,  we formulate the following stochastic two-phase flow system:  Find random fields
$p(x, \Theta, t)$, $v(x, \Theta, t)$,  $S(x,\Theta,t):  D \times I^N \times (0,T] \longrightarrow \mathbb{R}$ such that they almost surely (a.s)
satisfy the following equations subject to initial and boundary conditions,
\begin{eqnarray}
\label{tp-system}
\begin{split}
\begin{cases}
&\mbox{div}(u) = q \\
& u=-\lambda(S) k(x,\Theta )\nabla p\\
&\displaystyle \frac{\partial S}{\partial t}+\mbox{div}\big(uf_w(S)\big) =0.\\
\end{cases}
\end{split}
\end{eqnarray}
To further simply the notation, we will suppress the spatial variable $x$ and temporal variable $t$ in the rest of paper when no ambiguity occurs.

\subsection{Sparse grid stochastic collocation method}
\label{sect-sgc}
 For stochastic two-phase flow systems \eqref{tp-system}, the statistical properties (e.g., mean and variance)  of solutions are   of  interest. These properties may be obtained by first sampling
  the parameter random space using, for example, a Monte Carlo method or sparse grid collocation method, then solving the deterministic problems for the samples and analyzing the corresponding results to obtain the desired statistical quantities. The convergence of Monte Carlo methods is slow and a very large   number of samples may be  required,  which leads to high computational cost. Instead, we use the Smolyak sparse-grid collocation method \cite{sm63}, where the Smolyak interpolant  $\mathcal{A}(N+\ell,N)$ ($\ell\geq 1$) is a linear combination of tensor product interpolants with the property: only products with a relatively small number of nodes are used and the linear combination is chosen in such a way that an interpolation property for $N=1$ is preserved for $N >1$ \cite{ bnr00}.
In the notation $\mathcal{A}(N+\ell,N)$, the $\ell$ represents the interpolation level.  The sparse-grid collocation method is known to have the same asymptotic accuracy as tensor product collocation method, while requiring many  fewer collocation points as the parameter dimension increases \cite{ntw08}.

Sparse grids have been successfully applied  to stochastic
collocation in many recent works (e.g.,   \cite{mz10, mz11, jml}).
Based on Smolyak formula (ref.\cite{bnr00}), a set of collocation points $\{\Theta^{(j)}\}_{j=1}^{N_c}$ in $I^N$  are specially chosen, where $N_c$ is the number of collocation points.
With these chosen collocation points and corresponding weights $\{w^{(j)}\}_{j=1}^{N_c}$, the statistical properties of the solutions can be obtained.  At  each of the collocation points, the deterministic system  \eqref{tp-system}   is solved and the output, for example,  $S(x, \Theta^{(j)}, t)$ is obtained. Then the mean  of $S(x, \Theta, t)$ can be estimated by
\begin{eqnarray*}
E[S(x,\Theta, t)] &=& \int_{I^N}S(x,\Theta, t)\rho(\Theta)d\Theta \\
    &\approx & \int_{I^N}\mathcal{A}(N+\ell, N) \big[S(x,\Theta, t)\big]\rho(\Theta)d\Theta= \sum_{j=1}^{N_c}S(x,\Theta^{(j)}, t)w^{(j)}.
\end{eqnarray*}
Here the weights  $\{w^{(j)}\}_{j=1}^{N_c}$ are determined by the basis functions of $\mathcal{A}(N+\ell, N)$ and joint probability function $\rho(\Theta)$ (ref.\cite{gg98}).
Similarly, the variance of $S(x, \Theta^{(j)}, t)$ can be obtained by
\begin{eqnarray}
\label{comp-var}
\begin{split}
\mbox{Var}[S(x,\Theta, t)]&=&\int_{I^N}\big(S(x,\Theta, t)-E[S(x,\Theta, t)]\big)^2 \rho(\Theta)d\Theta \\
&\approx& \sum_{j=1}^{N_c}S^2(x,\Theta^{(j)}, t)w^{(j)}-E^2[S(x,\Theta, t)].
\end{split}
\end{eqnarray}

Let $H(N+\ell,  N)$ denote the number of  collocation points for Smolyak sparse grid interpolation $\mathcal{A}(N+\ell,N)$. Then it follows that (see \cite{bnr00})
\begin{equation}
\label{coll-nodes}
H(N+\ell, N)\approx \frac{2^{\ell}}{\ell!}N^{\ell} \quad \text{for $N\gg  1$}.
\end{equation}
This implies that  the  number of  collocation points algebraically increases with respect to the dimension $N$.
We utilize  Smolyak sparse grid collocation for the numerical computation.
The stochastic approximation of the Smolyak sparse grid collocation method  depends on the
number of  collocation points and the dimension $N$ of the random parameter space.
The convergence analysis in \cite{ntw08} implies that the convergence  of Smolyak sparse grid collocation is exponential with respect to the number of Smolyak points, but depends on the parameter dimension $N$. This exponential convergence rate behaves algebraically for $N\gg1$.

\section{High dimensional model representation}

The truncated KLE leads to the family of two-phase flow deterministic
models (\ref{tp-system}) with a high-dimensional parameter $\Theta$.
The most challenging part of solving such a high-dimensional
stochastic system is to discretize the high-dimensional stochastic
space.  There exist a few methods for the discretization of the random
space \cite{ag08, bnr00, bnt07,epsu09, fis96,gyz11,ghs91,lk10, mz11,ng08, ntw08, xiu05, zlt00} (more references can be found therein). Among these methods,
the sparse-grid collocation method has been widely used and generates
completely decoupled systems, each of which is the same size as the
deterministic system.  This method is usually very efficient in
moderate-dimensional spaces. However, when the dimension of the random
parameter is large, a large number of collocation points are required
(\ref{coll-nodes}), and the deterministic model (\ref{tp-system}) must
be solved at each of these collocation points.  Consequently, the
efficiency of this collocation method will deteriorate in a
high-dimensional space.  To overcome this difficulty, we use a
high-dimensional model representation (HDMR) to reduce the stochastic
dimension and enhance the efficiency of the simulation.  By truncating
HDMR, the high-dimensional model can be decomposed into a set of
low-dimensional models.  Hence the computational effort can be
significantly reduced \cite{mz10,yclk11}.

In the following section, we present a general HDMR framework and
propose a hybrid HDMR method.

\subsection{A general HDMR framework}
\label{Sec:abstract-HDMR}

In this section, we adopt the decomposition of multivariate functions
described in $\cite{ksww09}$ to present a general HDMR framework in
terms of operator theory.  Let $\mathcal{F}$ be a linear space of
real-valued functions $f(\Theta)$ defined on a cube $I^N$ and
$\Theta:=(\theta_1, \theta_2, \cdots, \theta_N)\in I^N$. In this
discussion $f(\Theta)$ represents a relationship between the random
vector model input, $\Theta$, and a model output (e.g., saturation,
water-cut).

To introduce HDMR approach, we define a set of projection operators as follows.

\begin{definition}\cite{ksww09}
  Let $\{\mathrm{P_j}\}_{j=1}^N$ be a set of commuting projection operators on $\mathcal{F}$ satisfying the following property:
\begin{equation}
\label{project-assum}
\mathrm{P_j}(f)=f \quad \text{if $f$ does not  depend on $\theta_j$,  and $\mathrm{P_j}(f)$ does not depend on $\theta_j$}.
\end{equation}
\end{definition}

Let $\mathbf{u}\subseteq \{1,\cdots, N\}$ be a subset and $I$ the identity operator, we define
\[
\mathrm{P}_{\mathbf{u}}:=\Pi_{j\in \mathbf{u}} \mathrm{P_j}, \quad \text{and}  \quad \mathrm{P}_{\emptyset}:=I.
\]
Associated with projection $\mathrm{P}_{\mathbf{u}}$, we define $\hmP_{\mfu}:=\mathrm{P}_{\{1,\cdots, N\}\setminus \mfu}$. Then $\hmP_{\mfu} f$ only depends
on the variables with indices in $\mfu$.

 Let $\mu$ be a measure on Borel subsets of $I^N$ and a product measure with unit mass, i.e.,
\begin{equation}
\label{def-mu}
d\mu(\Theta):=\Pi_{i=1}^N d\mu_i(\theta_i), \quad  \int_{I} d\mu_i(\theta_i)=1, \quad i=1,\cdots, N.
\end{equation}
The inner product $\langle \cdot, \cdot \rangle$ on $\mathcal{F}$
induced by the measure $\mu$ is defined as follows:
\[
\langle f, h\rangle :=\int_{I^N} f(\Theta) h(\Theta) d\mu(\Theta),  \quad  f, g\in \mathcal{F}.
\]
The norm $\|\cdot\|_{\mathcal{F}}$ on $\mathcal{F}$ is defined by $\|f\|_{\mathcal{F}}:=\langle f, f\rangle ^{1/2}$.
Given the measure $\mu$, we can specify the projection operator in (\ref{project-assum}).
We assume that the functions in $\mathcal{F}$ are integrable with respect to  $\mu$. For any $f\in \mathcal{F}$ and $j\in \{1, \cdots, N\}$, we define
\begin{equation}
\label{def-p}
\mP_j f(\Theta)=\int_I f(\Theta) d\mu_j(\theta_j).
\end{equation}
 Consequently, $\hmP_{\mfu} f (\Theta)=\int_{I^{N-|\mfu|}} f(\Theta)\Pi_{j\notin \mfu} d\mu_j(\theta_j)$ for any $\mfu\subseteq \{1, \cdots, N\}$.
 By the definitions of these projection operator, we can obtain  a decomposition of $\mathcal{F}$ as following (see \cite{ra99}),
\begin{equation}
\label{Decomp-F}
\mathcal F=\oplus_{\mfu\subseteq\{1,\cdots, N\}} \mathcal{F}_{\mfu}, \quad \text{where} \quad \mathcal{F}_{\mfu}= \bigg( \Pi_{j\in \mathbf{u}} (I-\mathrm{P}_j)\bigg)\hmP_{\mfu} \mathcal{F}.
\end{equation}

For any $\mfu \subseteq\{1,\cdots, N\}$, we define
\[
\mathrm{M}_{\mfu}=\bigg( \Pi_{j\in \mathbf{u}} (I-\mathrm{P}_j)\bigg)\hmP_{\mfu}.
\]
Then we can show the operators $\mathrm{M}_{\mfu}$ are commutative projection operators and mutually orthogonal, i.e.,
\[
\mathrm{M}_{\mfu}^2=\mathrm{M}_{\mfu},  \quad \mathrm{M}_{\mfu}\mathrm{M}_{\mathbf{v}}=0 \quad \text{for $\mfu\neq \mathbf{v}$}.
\]
The equation  (\ref{Decomp-F}) gives an abstract  HDMR expansion of $f$  by
\begin{equation}
\label{HDMR-1}
f=\sum_{\mfu\subseteq\{1, \cdots, N\}} f_{\mfu}=\sum_{\mfu\subseteq\{1, \cdots, N\}} \mathrm{M}_{\mfu}f,    \quad \text{where $\mathrm{M}_{\mfu}f\in \mathcal{F}_{\mfu}$}.
\end{equation}
The equation  (\ref{Decomp-F}) also implies that
\[
\|f\|^2_{\mathcal{F}}=\sum_{\mfu\subseteq\{1, \cdots, N\}} \|\mathrm{M}_{\mfu}f\|_{\mathcal{F}}^2, \quad  \hmP_{\emptyset}(\mathrm{M}_{\mfu}f)=0 \quad \text{for $\mfu\neq \emptyset$}.
\]
 Any set of commutative projectors $\{\mathrm{M}_{\mathbf{v}}\}$   generate a distributive lattice whose elements are obtained by all possible combinations (addition and multiplication) of the projectors in the set. The lattice
has a unique maximal projection  operator $\mathcal{M}$,   which gives the algebraically best approximation to the  functions in $\mathcal{F}$ \cite{gordon69}. The range of the maximal projection operator $\mathcal{M}$ for the lattice is the union of the ranges of $\{\mathrm{M}_{\mathbf{v}}\}$.  Because the commutative projectors $\{\mathrm{M}_{\mathbf{v}}\}$ are mutually orthogonal here, the
maximal projection operator $\mathcal{M}$ and the range $\mathcal{F}_{\mathcal{M}}$ of $\mathcal{M}$ have explicit expressions as follows:
\[
\mathcal{M}=\sum_{\mathbf{v}}\mathrm{M}_{\mathbf{v}},  \quad  \mathcal{F}_{\mathcal{M}}=\oplus_{\mathbf{v}}\mathcal{F}_{\mathbf{v}}.
\]
As more orthogonal projectors are retained in the set, the resulting approximation by the maximal projection operator $\mathcal{M}$ will become better.

If the measure $\mu$ in (\ref{def-mu}) is taken to be the probability  measure
\[
d\mu(\Theta)=\rho(\Theta)d\Theta=\Pi_{i=1}^N \rho_i(\theta_i)d\theta_i,
\]
then the resulting HDMR defined in (\ref{HDMR-1}) is called ANOVA-HDMR \cite{rass99} .    Let us fix a point $\overline{\Theta}:=(\bar{\theta}_1, \bar{\theta}_2, \cdots, \bar{\theta}_N)$.
If the measure $\mu$ in (\ref{def-mu}) is taken as the Dirac measure located at the point $\Theta$, i.e.,
\[
d\mu(\Theta)=\Pi_{i=1}^N \delta(\theta_i-\bar{\theta}_i)d\theta_i,
\]
then the resulting HDMR is Cut-HDMR. The point $\overline{\Theta}$ is so called cut-point or anchor point.
In ANOVA-HDMR, the $\hmP_{\mathbf{v}} f$ involves $N-|\mathbf{v}|$ dimensional  integration and the computation of the components of $f$ is expensive.  In Cut-HDMR, the $\hmP_{\mathbf{v}} f$ only involves the evaluation in a $N-|\mathbf{v}|$ dimensional space and the computation is cheap and straightforward. Because of this reason,
we  use Cut-HDMR for the analysis and computation in the paper.


\subsection{A hybrid HDMR}

In this subsection, we use a less abstract representation of the HDMR to
motivate approximations suitable for practical computation. Specifically,
we review the traditional approach to truncated HDMR, and then propose and
analyze an alternative hybrid HDMR.

Expanding elements of (\ref{HDMR-1}), the HDMR of $f(\Theta)$ can be
written in the form,
 \begin{equation}
 \label{HDMR-2}
 f(\Theta)=f_0 +\sum_{i=1}^n f_i(\theta_i) +\sum_{i<j} f_{ij}(\theta_i, \theta_j) +\cdots  f_{12\cdots n} (\theta_1, \theta_2, \cdots, \theta_n).
 \end{equation}
Here $f_0$ is the zeroth-order component denoting the mean effect of  $f(\Theta)$. The first-order component $f_i(\theta_i)$ represents the individual
contribution of the input $\theta_i$ and the second-order component $f_{ij}(\theta_i, \theta_j)$ represents the cooperative effects of $\theta_i$ and $\theta_j$ and
so on.  We define $q$-th order truncated HDMR by
\begin{equation}
\label{truncated-HDMR}
\mathcal{M}_{q}f=f_0 +\sum_{m=1}^q \sum_{i_1<\cdots i_m} f_{i_1\cdots i_m} (\theta_{i_1} \cdots \theta_{i_m}).
\end{equation}
For  most realistic physical systems, low-order  HDMR $\mathcal{M}_{q}f$ (e.g., $q\leq 3$) may give a good approximation \cite{rass99}. The $\mathcal{M}_{q}f$ can approximate $f(\Theta)$ through truncating the HDMR.   The $\mathcal{M}_{q}f$ consists of a large number of component terms for high-dimensional models, the computation of all the components may be  costly.  For many cases, some high-order cooperative effects cannot be neglected in the model's output, especially when a model strongly relies on a few dependent variables.  Consequently, the traditional truncated HDMR in (\ref{truncated-HDMR}) may not yield a very good approximation.  To eliminate or alleviate these drawbacks of traditional truncated HDMR, we propose a new truncated HDMR, which
we refer to as hybrid HDMR.

We use the Fourier amplitude sensitivity test \cite{css75} to select
the most active dimensions from the components of the high-dimensional
random vector $\Theta$.  Let $\{f_j\}_{j=1}^N$ be the first-order
components defined in Eq.(\ref{HDMR-2}) and let $\sigma^2(f_j)$ denote
the variance of $f_j$.  We may assume that $\sigma^2(f_1)\geq
\sigma^2(f_2)\geq \cdots \geq \sigma^2(f_N)$ as we can re-order the
index set $\{j\}_{j=1}^N$ to satisfy this requirement.  Alternatively,
we can order the index set to produce monotonically decreasing
expectations, $E[f_1]\geq E[f_2]\geq \cdots \geq E[f_N]$, and use this
ordering to select the most active dimensions.  In this paper, we only
consider the variance sensitivity, and we calculate $\sigma^2(f_i)$
($i=1,\cdots N$) using the method described in
(\ref{comp-var}). Because the first-order components $\{f_i\}_{i=1}^N$
are defined in one-dimensional random parameter spaces, the
computational cost for $\{\sigma^2(f_i)\}_{i=1}^N$ is very
small. Moreover, most of the elements in $\{\sigma^2(f_i)\}_{i=1}^N$
will be reused when we calculate the variance of outputs represented
by hybrid HDMR.  We set a threshold constant with $0<\zeta <1$ and
find an optimal $J$ such that
\begin{equation}
\label{find-J}
\sum_{j=1}^J \sigma^2(\psi_j)/\sum_{j=1}^N \sigma^2(\psi_j) \geq \zeta.
\end{equation}
Then we define $\{\theta_j\}_{j=1}^J$ to be the most $J$ active
dimensions.  We note that $\sigma^2(f_j)$ provides information on the
impact of $\theta_j$ when it is acting alone on the output. It is
clear that if a change of the random input $\theta_j$ within its range
leads to a significant change in the output, then $\sigma^2(f_j)$ is
large. Therefore, (\ref{find-J}) provides a reasonable criteria for
identifying the most active dimensions, and for a given stochastic
model, we can adjust the value of $\zeta$ in Eq. (\ref{find-J}) such
that $J$ is much less than $N$. Defining the index set
$\mathcal{J}=\{1, 2, \cdots, J\}$, the proposed hybrid HDMR may be
written,
\begin{equation}
\label{new-HDMR1}
\mathcal{M}_{\mathcal{J}}f:=\sum_{\mathbf{v}\subseteq \mathcal{J}} \mathrm{M}_{\mathbf{v}}f+\sum_{i\in \{1,\cdots, N\}\setminus \mathcal{J}} \mathrm{M}_i f.
\end{equation}
Here the set of projectors $\{\mathrm{M}_{\mathbf{v}}, \mathbf{v}\subseteq \mathcal{J}\}\cup \bigg\{\mathrm{M}_i, i\in \{1,\cdots, N\}\setminus \mathcal{J}\bigg\}$ generate a lattice and its
 maximal projection operator is $\mathcal{M}_{\mathcal{J}}$. By (\ref{new-HDMR1}), the hybrid HDMR $\mathcal{M}_{\mathcal{J}}f$ consists of two parts: the first part  $\sum_{\mathbf{v}\subseteq \mathcal{J}} \mathrm{M}_{\mathbf{v}}f$ is the complete HDMR on the most active dimensions indexed in $\mathcal{J}$, the second part $\sum_{i\in \{1,\cdots, N\}\setminus \mathcal{J}} \mathrm{M}_i f$
 is the first-order truncated HDMR on the remaining dimensions $\{1,\cdots, N\}\setminus \mathcal{J}$.
 We note that the component $f_{\mathbf{u}}$ of $f$ defined in (\ref{HDMR-1})  can be recursively computed and  explicitly computed, respectively  by (ref. \cite{ksww09})
\begin{eqnarray}
\label{HDMR-express1}
f_{\mathbf{u}}=\hmP_{\mfu}f -\sum_{\mathbf{v}\subsetneq \mathbf{u}} f_{\mathbf{v}} \quad \text{and} \quad
f_{\mathbf{u}}=\sum_{\mathbf{v}\subseteq \mfu} (-1)^{|\mfu|-|\mathbf{v}|} \hmP_{\mathbf{v}} f.
\end{eqnarray}
 By (\ref{HDMR-express1}), we can rewrite equation (\ref{new-HDMR1}) by
 \begin{equation}
\label{new-HDMR2}
\mathcal{M}_{\mathcal{J}}f:=\hmP_{\mathcal{J}} f+\sum_{i\in \{1,\cdots, N\}\setminus \mathcal{J}} f_i(\theta_i).
\end{equation}
We note that the operator $\hmP_{\mathcal{J}}$ projects the $N$-variable function $f$ to a function defined on the $J$  the most active dimensions. The term $\hmP_{\mathcal{J}} f$  gives the
dominant contribution to $\mathcal{M}_{\mathcal{J}}f$.  Since any first-order components are usually important, these components are retained in $\mathcal{M}_{\mathcal{J}}f$.
In Cut-HDMR, there is no error for the approximation $\mathcal{M}_{\mathcal{J}}f$  of $f(\Theta)$ whenever the point $\Theta$ is located the $J$-th dimensional subvolume
$\{ \Theta_{\mathcal{J}}, \bar{\theta}_{J+1}, \cdots, \bar{\theta}_N\}$ across the cut-point $\overline{\Theta}$.

In this paper, identification of the $J$ most important dimensions is based on a global sensitivity analysis on the univariate terms of HDMR.  This criteria has been
shown to be reasonable for many cases and is often used \cite{css75,  mz10,yclk11,gh10}.
However, it is important to note that this criteria may not effectively identify the most important dimensions in some cases.  For example, in some situations the individual contribution of an input parameter may not be significant, even though its cooperative influence with other inputs is significant.  Similarly, the identification of the most active dimensions for highly variable solutions may require different criteria. Recent work \cite{wl13} presents a preliminary discussion on the choice of this critera. Since, the new hybrid HDMR naturally includes the cooperative effects of higher-order terms within the $J$ most important dimensions, this selection criteria is key to addressing these challenging cases and is an important topic for furture work.

If we use traditional truncated HDMR to approximate the term $\hmP_{\mathcal{J}} f$ in (\ref{new-HDMR2}), then we can obtain the adaptive  HDMR developed in \cite{mz10, yclk11},
\begin{equation}
\label{ad-HDMR}
\mathcal{M}^{ad}_{\mathcal{J},q}f:=\sum_{\mathbf{v}\subseteq \mathcal{J},  |\mathbf{v}|\leq q} \mathrm{M}_{\mathbf{v}}(\hmP_{\mathcal{J}} f)+\sum_{i\in \{1,\cdots, N\}\setminus \mathcal{J}} f_i(\theta_i).
\end{equation}
To simplify the notation, we will suppress $q$ in   $\mathcal{M}^{ad}_{\mathcal{J},q}$ in the paper when the truncation order $q$ is not emphasized.  The following proposition gives the relationship among $f$, $\mathcal{M}_{\mathcal{J}}f$ and $\mathcal{M}^{ad}_{\mathcal{J}}f$.
\begin{proposition}
Let $\mathcal{M}_{\mathcal{J}}f$ and $\mathcal{M}^{ad}_{\mathcal{J}}f$ be defined in (\ref{new-HDMR2}) and (\ref{ad-HDMR}), respectively. Then
\begin{equation}
\label{eq0-ad}
\|f-\mathcal{M}^{ad}_{\mathcal{J}}f\|_{\mathcal{F}}^2=\|f-\mathcal{M}_{\mathcal{J}}f\|_{\mathcal{F}}^2+\|\mathcal{M}_{\mathcal{J}}f-\mathcal{M}^{ad}_{\mathcal{J}}f\|_{\mathcal{F}}^2.
\end{equation}
\end{proposition}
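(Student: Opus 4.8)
The plan is to read the asserted identity \eqref{eq0-ad} as a Pythagorean relation in the inner-product space $(\mathcal{F}, \langle\cdot,\cdot\rangle)$. Writing $f - \mathcal{M}^{ad}_{\mathcal{J}}f = (f - \mathcal{M}_{\mathcal{J}}f) + (\mathcal{M}_{\mathcal{J}}f - \mathcal{M}^{ad}_{\mathcal{J}}f)$ and expanding $\|f-\mathcal{M}^{ad}_{\mathcal{J}}f\|_{\mathcal{F}}^2$, the claimed identity is exactly equivalent to the vanishing of the cross term $\langle f-\mathcal{M}_{\mathcal{J}}f,\,\mathcal{M}_{\mathcal{J}}f-\mathcal{M}^{ad}_{\mathcal{J}}f\rangle$. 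So the whole proof reduces to showing that the two differences $f-\mathcal{M}_{\mathcal{J}}f$ and $\mathcal{M}_{\mathcal{J}}f-\mathcal{M}^{ad}_{\mathcal{J}}f$ are orthogonal, after which \eqref{eq0-ad} follows by simply dropping the zero term.

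To establish orthogonality I would rewrite all three quantities as selections from the single mutually orthogonal decomposition $f=\sum_{\mathbf{u}}\mathrm{M}_{\mathbf{u}}f$. The one nontrivial reduction is to verify that $\mathrm{M}_{\mathbf{v}}(\hmP_{\mathcal{J}} f)=\mathrm{M}_{\mathbf{v}}f$ for every $\mathbf{v}\subseteq\mathcal{J}$: this holds because $\hmP_{\mathbf{w}}\hmP_{\mathcal{J}}=\hmP_{\mathbf{w}}$ whenever $\mathbf{w}\subseteq\mathcal{J}$ (the extra averaging over coordinates outside $\mathcal{J}$ is trivial, since $\hmP_{\mathcal{J}}f$ no longer depends on them and each $d\mu_i$ has unit mass), combined with the explicit formula $\mathrm{M}_{\mathbf{v}}g=\sum_{\mathbf{w}\subseteq\mathbf{v}}(-1)^{|\mathbf{v}|-|\mathbf{w}|}\hmP_{\mathbf{w}}g$ from \eqref{HDMR-express1}. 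Using $f_i(\theta_i)=\mathrm{M}_i f$, this lets me write $\mathcal{M}_{\mathcal{J}}f=\sum_{\mathbf{u}\in\mathcal{A}}\mathrm{M}_{\mathbf{u}}f$ and $\mathcal{M}^{ad}_{\mathcal{J}}f=\sum_{\mathbf{u}\in\mathcal{B}}\mathrm{M}_{\mathbf{u}}f$ for the index families $\mathcal{A}=\{\mathbf{v}\subseteq\mathcal{J}\}\cup\{\{i\}:i\notin\mathcal{J}\}$ and $\mathcal{B}=\{\mathbf{v}\subseteq\mathcal{J},|\mathbf{v}|\leq q\}\cup\{\{i\}:i\notin\mathcal{J}\}$, with $\mathcal{B}\subseteq\mathcal{A}$. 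Then $\mathcal{M}_{\mathcal{J}}f-\mathcal{M}^{ad}_{\mathcal{J}}f=\sum_{\mathbf{u}\in\mathcal{A}\setminus\mathcal{B}}\mathrm{M}_{\mathbf{u}}f$ and $f-\mathcal{M}_{\mathcal{J}}f=\sum_{\mathbf{u}\notin\mathcal{A}}\mathrm{M}_{\mathbf{u}}f$ run over the disjoint index families $\mathcal{A}\setminus\mathcal{B}$ and $\{\mathbf{u}:\mathbf{u}\notin\mathcal{A}\}$. Since $\langle\mathrm{M}_{\mathbf{u}}f,\mathrm{M}_{\mathbf{w}}f\rangle=0$ for $\mathbf{u}\neq\mathbf{w}$ by the mutual orthogonality of the $\mathrm{M}_{\mathbf{u}}$ already established in the excerpt, the cross term is a double sum over pairs of distinct indices and vanishes term by term.

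The main obstacle is the reduction $\mathrm{M}_{\mathbf{v}}(\hmP_{\mathcal{J}}f)=\mathrm{M}_{\mathbf{v}}f$ for $\mathbf{v}\subseteq\mathcal{J}$; this is what makes the adaptive HDMR a genuine sub-sum of the \emph{same} orthogonal expansion that defines $\mathcal{M}_{\mathcal{J}}f$, rather than a formally distinct object built from $\hmP_{\mathcal{J}}f$. Once both differences are identified as orthogonal projections of $f$ onto complementary (disjoint) collections of the spaces $\mathcal{F}_{\mathbf{u}}$, the identity \eqref{eq0-ad} is just the Pythagorean theorem and requires no further estimates.
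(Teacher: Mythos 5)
Your proof is correct and takes essentially the same route as the paper's: both write $f-\mathcal{M}^{ad}_{\mathcal{J}}f=(f-\mathcal{M}_{\mathcal{J}}f)+(\mathcal{M}_{\mathcal{J}}f-\mathcal{M}^{ad}_{\mathcal{J}}f)$ and kill the cross term by observing that the two differences are built from disjoint collections of mutually orthogonal HDMR components, so (\ref{Decomp-F}) gives the Pythagorean identity. The only difference is that you explicitly verify the reduction $\mathrm{M}_{\mathbf{v}}(\hmP_{\mathcal{J}}f)=\mathrm{M}_{\mathbf{v}}f$ for $\mathbf{v}\subseteq\mathcal{J}$, a step the paper leaves implicit when it asserts that the component sets do not intersect.
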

\begin{proof}
We note the equality
\begin{equation}
\label{eq1-ad}
f-\mathcal{M}^{ad}_{\mathcal{J}}f=(f-\mathcal{M}_{\mathcal{J}}f) +(\mathcal{M}_{\mathcal{J}}f-\mathcal{M}^{ad}_{\mathcal{J}}f).
\end{equation}
Because the set of HDMR components in $f-\mathcal{M}_{\mathcal{J}}f$ do not intersect with the set of HDMR components in $\mathcal{M}_{\mathcal{J}}f-\mathcal{M}^{ad}_{\mathcal{J}}f$,
The equation  (\ref{Decomp-F}) implies that
\begin{equation}
\label{eq2-ad}
 \langle f-\mathcal{M}_{\mathcal{J}}f,   \mathcal{M}_{\mathcal{J}}f-\mathcal{M}^{ad}_{\mathcal{J}}f \rangle =0.
\end{equation}
Then the equation (\ref{eq0-ad}) follows immediately by combining equation (\ref{eq1-ad}) and (\ref{eq2-ad}).
\end{proof}

The equation (\ref{eq0-ad}) implies  that hybrid HDMR has better approximation properties than adaptive  HDMR.

The mean of $f$ can be computed by summing the mean of all HDMR components of $f$ for both ANOVA-HDMR and Cut-HDMR.  The variance of $f$ is the sum of the variance of
HDMR components of $f$ for ANOVA-HDMR.  However, the direct summation of variance of components of $f$ may not equal to variance of $f$  for Cut-HDMR. Consequently,  we may want to have a truncated ANOVA-HDMR to approximate the variance of $f$  for Cut-HDMR.   If we directly derive a truncated ANOVA-HDMR, the computation is very costly and more expensive than the direct computation of the variance of $f$ itself. This is because a truncated ANOVA-HDMR requires computing  many high-dimensional integrations.  To overcome the difficulty,  we can use  an efficient  two-step approach to  derive a hybrid ANOVA-HDMR through the hybrid Cut-HDMR.

Let $\Theta_{\mathcal{J}}$ be the $J$-dim variable with indices in $\mathcal{J}$.  Using the hybrid HDMR formulation (\ref{new-HDMR2}),  the hybrid  Cut-HDMR $f^{cut}(\theta)$ of $f$ has the following form
\begin{equation}
\label{cut}
f^{cut}(\Theta)=\hat{f}^{cut}(\Theta_{\mathcal{J}})+\sum_{i\in \{1,\cdots, N\}\setminus \mathcal{J}} f_i^{cut}(\theta_i),   \quad \text{where $\hat{f}^{cut}(\Theta_{\mathcal{J}}):=\hmP_{\mathcal{J}} f$}.
\end{equation}
We use $\mathcal{M}_{\mathcal{J}}$ to act on $f^{cut}(\Theta)$ to get a hybrid ANOVA-HDMR $f^{anova}(\theta)$ of $f$, which has the following form
\begin{equation}
\label{anova}
f^{anova}(\Theta):=\mathcal{M}_{\mathcal{J}}f^{cut}=\hat{f}^{anova}(\Theta_{\mathcal{J}})+\sum_{i\in \{1,\cdots, N\}\setminus \mathcal{J}} f_i^{anova}(\theta_i).
\end{equation}
Then we have the following theorem.
\begin{theorem}
\label{HDMR-thm3}
Let $f^{cut}(\Theta)$ and $f^{anova}(\Theta)$ be defined in (\ref{cut}) and (\ref{anova}), respectively.  Then
\begin{eqnarray}
E[f^{cut}]=E[f^{anova}]=E[\hat{f}^{cut}]+\sum_{i\in \{1,\cdots, N\}\setminus \mathcal{J}} E[f_i^{cut}], \label{mean-eq}\\
\mbox{Var}[f^{cut}]=\mbox{Var}[f^{anova}]=\mbox{Var}[\hat{f}^{cut}]+\sum_{i\in \{1,\cdots, N\}\setminus \mathcal{J}} \mbox{Var}[f_i^{cut}]. \label{var-eq}
\end{eqnarray}
\end{theorem}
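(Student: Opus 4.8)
The plan is to reduce the theorem to two essentially independent ingredients: first, the additive decompositions of the mean and variance of $f^{cut}$ that appear on the right-hand sides of (\ref{mean-eq}) and (\ref{var-eq}); and second, the pointwise identity $f^{anova}=f^{cut}$ in $\mathcal{F}$, which instantly transfers both statistics from the cut form to the ANOVA form. I would establish the decompositions first, since they carry the real content, and then dispatch $f^{anova}=f^{cut}$ by a direct computation with the projectors.

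For the mean, linearity of the expectation applied to the defining expression (\ref{cut}) gives $E[f^{cut}]=E[\hat f^{cut}]+\sum_{i\notin\mathcal{J}}E[f^{cut}_i]$ with no further work. The variance is the substantive step. The summands in (\ref{cut}) depend on pairwise disjoint blocks of coordinates: $\hat f^{cut}$ depends only on $\Theta_{\mathcal{J}}$, while each $f^{cut}_i$ depends only on the single coordinate $\theta_i$ with $i\notin\mathcal{J}$. Because the joint density $\rho(\Theta)=\prod_i\rho_i(\theta_i)$ is a product measure, functions of disjoint index blocks are stochastically independent, hence these summands are mutually uncorrelated and every cross-covariance in $\mbox{Var}[f^{cut}]$ vanishes, leaving $\mbox{Var}[f^{cut}]=\mbox{Var}[\hat f^{cut}]+\sum_{i\notin\mathcal{J}}\mbox{Var}[f^{cut}_i]$. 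I regard this as the crux: for a generic Cut-HDMR the component functions are \emph{not} $\mu$-orthogonal, so variance does not add (precisely the failure noted before the theorem), and the only reason additivity is recovered here is the disjoint-support structure built into the hybrid expansion.

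It then remains to show $f^{anova}=f^{cut}$, from which $E[f^{anova}]=E[f^{cut}]$ and $\mbox{Var}[f^{anova}]=\mbox{Var}[f^{cut}]$ are immediate. Writing $\mathcal{M}_{\mathcal{J}}=\sum_{\mathbf{v}\subseteq\mathcal{J}}\mathrm{M}_{\mathbf{v}}+\sum_{i\notin\mathcal{J}}\mathrm{M}_i$ and applying it termwise to (\ref{cut}), I would use the defining property (\ref{project-assum}) --- if $g$ does not depend on $\theta_j$ then $\mP_j g=g$, so $(I-\mP_j)g=0$ --- to verify two facts: (i) $\mathrm{M}_{\mathbf{u}}g=0$ whenever $\mathbf{u}$ contains an index on which $g$ does not depend, and (ii) the retained projectors reconstruct each piece of $f^{cut}$. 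Concretely, since $\hat f^{cut}$ involves only indices in $\mathcal{J}$, the ANOVA sum collapses to $\sum_{\mathbf{v}\subseteq\mathcal{J}}\mathrm{M}_{\mathbf{v}}\hat f^{cut}=\hat f^{cut}$ while $\mathrm{M}_i\hat f^{cut}=0$ for $i\notin\mathcal{J}$; and for each univariate $f^{cut}_i$ the retained projectors return its mean (through $\mathrm{M}_{\emptyset}$) and its fluctuation (through $\mathrm{M}_i$), every other retained projector annihilating it, so $\mathcal{M}_{\mathcal{J}}f^{cut}_i=f^{cut}_i$. Summing gives $f^{anova}=\mathcal{M}_{\mathcal{J}}f^{cut}=\hat f^{cut}+\sum_{i\notin\mathcal{J}}f^{cut}_i=f^{cut}$. (The mean equality could alternatively be obtained without this identity, using $\hmP_{\emptyset}(\mathrm{M}_{\mathbf{u}}f^{cut})=0$ for $\mathbf{u}\neq\emptyset$ together with the $\mathcal{F}$-orthogonality in (\ref{Decomp-F}).) The main obstacle I anticipate is purely the bookkeeping in step (ii): one must track exactly which coordinates each $\hmP_{\mathbf{v}}$-image still depends on, so that the annihilation $(I-\mP_j)(\cdot)=0$ is applied only where it is legitimate, and the disjointness of $\mathcal{J}$ and its complement is what makes this bookkeeping close cleanly.
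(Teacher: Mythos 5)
Your proposal is correct, and it reaches the conclusion by a route organized differently from the paper's. The paper works on the ANOVA side: it explicitly computes the components of $f^{anova}=\mathcal{M}_{\mathcal{J}}f^{cut}$, finding $\hat f^{anova}=\hat f^{cut}+\sum_{i\notin\mathcal{J}}E[f_i^{cut}]$ and $f_i^{anova}=f_i^{cut}+C_i$ with $C_i$ constant, deduces the mean identity from $E[f_i^{anova}]=0$, obtains $\mbox{Var}[f^{anova}]=\mbox{Var}[\hat f^{cut}]+\sum_{i\notin\mathcal{J}}\mbox{Var}[f_i^{cut}]$ from the mutual $\mathcal{F}$-orthogonality of the ANOVA components, and finally transfers to $f^{cut}$ by observing that $f^{cut}$ and $f^{anova}$ differ by a constant. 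You instead prove the variance additivity directly on $f^{cut}$, from the observation that $\hat f^{cut}$ and the $f_i^{cut}$ depend on pairwise disjoint coordinate blocks and are therefore independent (hence uncorrelated) under the product measure $\rho$, and you transfer to $f^{anova}$ by verifying the pointwise identity $\mathcal{M}_{\mathcal{J}}f^{cut}=f^{cut}$ via projector bookkeeping; your key fact, that $\mathrm{M}_{\mathbf{u}}g=0$ whenever $\mathbf{u}$ contains an index on which $g$ does not depend, does follow from (\ref{project-assum}) as you indicate. The two mechanisms for variance additivity are the same fact in different clothing (orthogonality, under a product measure, of centered functions of disjoint blocks), but your version makes the probabilistic reason transparent and avoids computing the ANOVA components at all. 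You also establish something slightly sharper than the paper writes down: in the paper's notation one checks $C_i=-E[f_i^{cut}]$, so the residual constant in its last display vanishes and $f^{anova}=f^{cut}$ exactly, which is precisely your identity. Both arguments are complete and correct.
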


The proof of Theorem \ref{HDMR-thm3} is presented in Appendix \ref{app1}.

Theorem \ref{HDMR-thm3} implies that we can compute the mean and
variance of $f^{cut}$ by directly summing the means and variances of
the components of $f^{cut}$.  This observation can significantly reduce the
complexity of this computation.  In addition, the derived hybrid ANOVA-HDMR
$f^{anova}$ is easily obtained by using the two-step approach through
the hybrid Cut-HDMR $f^{cut}$.  Our further calculation shows that
the traditional truncated Cut-HDMR (\ref{truncated-HDMR}) and adaptive
Cut-HDMR (\ref{ad-HDMR}) do not have these properties.  We use sparse-grid
quadrature \cite{gg98} to compute the mean and variance in the paper.

Compared with the traditional truncated HDMR and adaptive HDMR, the
hybrid HDMR defined in Eq.(\ref{new-HDMR2}) has the following
advantages: (a) The hybrid HDMR consists of significantly fewer terms
than the traditional truncated HDMR and adaptive HDMR, the total
computational effort of the hybrid HDMR can be substantially reduced.
(b) Since the hybrid HDMR inherently includes all the cooperative
contributions from the most active dimensions, approximation accuracy
is not worse (maybe better) in hybrid HDMR than the traditional
truncated HDMR and adaptive HDMR; (c) According to Theorem
\ref{HDMR-thm3}, the computation of variance of hybrid HDMR is much
more efficient.

\subsection{Analysis of computational complexity}

In the previous subsection, we have addressed the accuracy of the hybrid HDMR and developed some comparisons with traditional truncated HDMR and adaptive HDMR.
In this subsection, we discuss the computational efficiency for the various HDMR techniques when Smolyak sparse-grid collocation is used.  We remind the reader that the HDMR refers to Cut-HDMR.

Let $C(f, \ell)$ be the number of sparse grid collocation points with level $\ell$ in full random parameter dimension space and $C(\mathcal{M}_q f, \ell)$ the total number of sparse
grid collocation points with level $\ell$ in the traditional truncated HDMR $\mathcal{M}_q f$.  We define $C(\mathcal{M}_{\mathcal{J}} f, \ell)$ and $C(\mathcal{M}^{ad}_{\mathcal{J},q} f, \ell)$
in a similar way.  We first consider the case $\ell=2$ and $q=2$  and calculate the total number of sparse grid collocation points for the different approaches.  By using (\ref{coll-nodes}), we have for $N\gg 1$
\begin{eqnarray*}
C(f, 2)&=&H(N+2,N)\approx 2 N^2,\\
C(\mathcal{M}_2 f, 2)&=&\sum_{j=1}^2 {N\choose j}H(j+2,j)=5N+13{N\choose 2}\approx 13N^2,\\
C(\mathcal{M}_{\mathcal{J}} f, 2)&=&H(J+2,J)+(N-J)H(1+2,1)\approx 2J^2 +5(N-J),\\
C(\mathcal{M}^{ad}_{\mathcal{J},2} f, 2)&=&\sum_{j=1}^2 {J\choose j}H(j+2,j)+(N-J)H(1+2,1)\approx 13J^2+5N.
\end{eqnarray*}
Consequently,  if $J\approx N/2$, it follows immediately that
\[
C(\mathcal{M}_{\mathcal{J}} f, 2)< C(f, 2)<C(\mathcal{M}^{ad}_{\mathcal{J},2} f, 2)<C(\mathcal{M}_2 f, 2).
\]
This means that the hybrid HDMR $\mathcal{M}_{\mathcal{J}} f$ requires
the smallest number of collocation points when $\ell=2$ and $q=2$.
This case is of particular interest in this paper.

Next we consider two other cases of interest.  First, it can be shown that if $J\approx 30$ and $N>(\frac{J}{3})^{1.5}$, then
\[
C(\mathcal{M}_{\mathcal{J}} f, 3)<C(\mathcal{M}^{ad}_{\mathcal{J},2} f, 3)<C(\mathcal{M}_2 f, 3)<C(f, 3).
\]
This implies that the computational effort in the hybrid HDMR
$\mathcal{M}_{\mathcal{J}} f$ is the least for $\ell=3$ when the
number of the most active dimensions is moderate for a
high-dimensional problem. However, if the number $J$ of the most
active dimensions is large (e.g., $J\gg30$), we can show that
\begin{equation}
\label{large-J}
C(\mathcal{M}^{ad}_{\mathcal{J},q} f, \ell)<C(\mathcal{M}_q f, \ell)<C(\mathcal{M}_{\mathcal{J}} f, \ell)<C(f, \ell), \quad \text{where $\ell\geq 3$ and $q\leq 3$}.
\end{equation}
This relationship tells us that adaptive HDMR
$\mathcal{M}^{ad}_{\mathcal{J},q}f$ may be the most efficient if
higher-level collocation is required and $J$ is large as well.

For a high-dimensional stochastic model, if the number $J$ of the most
active dimensions is large and high-level (e.g., level 3 and above)
sparse-grid collocation is required to approximate the term
$\hmP_{\mathcal{J}} f$ in (\ref{new-HDMR2}), using the adaptive HDMR
(\ref{ad-HDMR}) may improve efficiency according to the bounds given in
(\ref{large-J}).  Adaptive HDMR has been extensively considered in
many recent papers \cite{gh10,mz10,mz11, yclk11}.  However, in our experience,
the number of the most active dimensions $J$ is often
less than ${N\over 2}$ as $N$ is large.  If $\hmP_{\mathcal{J}} f$ is
smooth with respect to $\Theta_{\mathcal{J}}$, low-level sparse grid
collocation (e.g., level 2) will generally provide an accurate
approximation.  Many problems fall into this class, and we focus
on them in this paper.

\subsection{Integrating HDMR and sparse-grid collocation}
\label{sect-HDMR-sgc}

As stated in Subsection \ref{sect-sgc}, the sparse grid stochastic
collocation method can reduce to the stochastic two-phase flow system
(\ref{tp-system}) into a set of deterministic two-phase flow systems.
However, it suffers from {\it curse of dimensionality} with increasing
stochastic dimension.  Integrating HDMR and a sparse-grid collocation
methods provides an approach  to overcome this difficulty and may
significantly enhance the efficiency.

Without loss of generality, we consider the saturation solution $S(\Theta)$ as an example to present the technique for the hybrid Cut-HDMR.
Using (\ref{new-HDMR2}), (\ref{HDMR-express1}) and Smolyak sparse-grid interpolation,  we have
 \begin{eqnarray}
 \label{approx-eq1}
 \begin{split}
  S(\Theta)&\approx \mathcal{M}_{\mathcal{J}} S(\Theta)=\hat{S}(\Theta_{\mathcal{J}})+\sum_{i\in \{1,\dots, N\}\setminus \mathcal{J}} S_i(\theta_i)\\
  &=\hat{S}(\Theta_{\mathcal{J}})+ \sum_{i\in \{1,\dots, N\}\setminus \mathcal{J}} \hat{S}(\theta_i)-(N-J)S_0\\
  &\approx \mathcal{A}(J+\ell, J)\big[\hat{S}(\Theta_{\mathcal{J}})\big]+\sum_{i\in \{1,\dots, N\}\setminus \mathcal{J}}\mathcal{A}(1+\ell, 1)\big[ \hat{S}(\theta_i)\big]-(N-J)S_0,
  \end{split}
 \end{eqnarray}
where $\hat{S}(\Theta_{\mathcal{J}})={\hmP_{\mathcal{J}}S(\Theta)}$, $\hat{S}(\theta_i)=\hmP_{i}S(\Theta)$  and $S_0=\hmP_{\emptyset}S(\Theta)=S(\overline{\Theta})$. Here $\overline{\Theta}$
is the cut-point for a Cut-HDMR. The choice of the cut-point may
affect the accuracy of the truncated Cut-HDMR approximation. The study
in \cite{gao10} argued that an optimal choice of the cut-point is the
center point of a sparse-grid quadrature.  In this paper, we will use
such a cut-point for computation.  Due to Theorem \ref{HDMR-thm3}, the
mean and variance of $S$ can be approximated by
\begin{eqnarray}
E[S(\Theta)] &\approx & E\bigg[\mathcal{A}(J+\ell, J)\big[\hat{S}(\Theta_{\mathcal{J}})\big]\bigg]+\sum_{i\in \{1,\dots, N\}\setminus \mathcal{J}} E\bigg[ \mathcal{A}(1+\ell, 1)\big[ \hat{S}(\theta_i)\big]\bigg] \nonumber\\
&-&(N-J)S_0, \label{mean-comp1}\\
\mbox{Var}[S(\Theta)] &\approx& \mbox{Var}\bigg[\mathcal{A}(J+\ell, J)\big[\hat{S}(\Theta_{\mathcal{J}})\big]\bigg]+\sum_{i\in \{1,\dots, N\}\setminus \mathcal{J}} \mbox{Var}\bigg[ \mathcal{A}(1+\ell, 1)\big[ \hat{S}(\theta_i)\big]\bigg] \nonumber.
\end{eqnarray}
From (\ref{approx-eq1}), we find that the approximation error of the mean and variance comes from two sources: truncated HDMR and Smolyak sparse grid quadrature.
Let $\{\Theta_{\mathcal{J}}^{(j)}, w^{(j)}\}_{j=1}^{N_J}$ be the set of pairs of collocation points and weights in the most active subspace $I^J$ and $\{\theta_i^{(k)}, w^{(k)}\}_{k=1}^{N_i}$  the set of pairs of collocation
points and weights in $I$.   We note that $N_J=H(J+\ell, J)$ and $N_i=H(1+\ell, 1)$. Then by (\ref{mean-comp1}),  the mean of $S$ can be computed by
\begin{equation}
\label{mean-comp2}
E[S(\Theta)] \approx \sum_{j=1}^{N_J} \hat{S}(\Theta_{\mathcal{J}}^{(j)})w^{(j)}+\sum_{i\in \{1,\dots, N\}\setminus \mathcal{J}}\sum_{k=1}^{N_i}\hat{S}(\theta_i^{(k)})w^{(k)}-(N-J)S_0.
\end{equation}
Similarly, we can  compute $\mbox{Var}[S(\Theta)]$ in terms of evaluations of $\hat{S}(\Theta_{\mathcal{J}})$ and $\hat{S}(\theta_i)$ at the collocation points.

Because each of the terms in adaptive HDMR
$\mathcal{M}^{ad}_{\mathcal{J},q}S$ are usually correlated,
the variance of $\mathcal{M}^{ad}_{\mathcal{J},q}S$ is not
equal to the sum of the variance of each term in (\ref{ad-HDMR}). Let
$\{\Theta_i^{(n)}, w^{(n)}\}_{n=1}^{N_c}$ (where $N_c=H(N+\ell, N)$)
be the set of pairs of collocation points and weights in the full
random dimension $I^N$. To compute the variance using adaptive HDMR, we need
to project each collocation point $\Theta_i^{(n)}$ onto the components
$\{\Theta_{\mathbf{v}}^{(n)}: \mathbf{v}\subseteq \mathcal{J},
|\mathbf{v}|\leq q\}$ and $\{\theta_i^{(n)}: i\in \{1,\cdots,
N\}\setminus \mathcal{J}\}$ and interpolate all terms in
(\ref{ad-HDMR}).  Then we use (\ref{comp-var}) to calculate the
variance.  This process involves at most $H(N+\ell, N)\big[
N-J+\sum_{j=1}^q{J \choose j}\big]$ Smolyak sparse-grid
interpolations.  The computation of these interpolations is usually
very expensive when $N$ and $J$ are large, and hence, computing the
variance using adaptive HDMR is usually much more expensive than using
hybrid HDMR.  The numerical experiments in Section \ref{sect-num}
demonstrate this performance advantage for hybrid HDMR.


\section{Mixed multiscale finite element method}

In Section \ref{sect-HDMR-sgc}, we have discussed integrating hybrid  HDMR and sparse grid collocation method to reduce the computation complexity from high-dimensional stochastic spaces.
The permeability field is often heterogeneous in porous media. It is necessary to use a numerical method to capture the heterogeneity.  MMsFEM is one of such numerical methods and has been widely used in simulating flows in heterogeneous porous media \cite{aej08, jml}.  To simulate the two-phase flow system (\ref{tp-system}), it is necessary to retain local conservation for
velocity (or flux).  To this end, we use MMsFEM to solve the flow equation and obtain locally conservative velocity. Using MMsFEM coarsens the multiscale model in spatial space and can significantly enhance the computation efficiency.

Corresponding to the hybrid HDMR expansion of $S(\Theta)$ in (\ref{approx-eq1}), i.e.,
\[
\mathcal{M}_{\mathcal{J}} S(\Theta)=\hat{S}(\Theta_{\mathcal{J}})+\sum_{i\in \{1,\dots, N\}\setminus \mathcal{J}} \hat{S}(\theta_i)-(N-J)S_0,
\]
the velocity $u(\Theta)$ in (\ref{tp-system}) also admits the same hybrid  HDMR expansion as  following
\begin{equation}
\label{HDMR-u}
\mathcal{M}_{\mathcal{J}} u(\Theta)=\hat{u}(\Theta_{\mathcal{J}})+ \sum_{i\in \{1,\dots, N\}\setminus \mathcal{J}} \hat{u}(\theta_i)-(N-J)u_0,
\end{equation}
where $\hat{u}(\Theta_{\mathcal{J}})={\hmP_{\mathcal{J}}u(\Theta)}$, $\hat{u}(\theta_i)=\hmP_{i}u(\Theta)$  and $u_0=u(\overline{\Theta})$. We use $\hat{u}(\Theta_{\mathcal{J}})$ to
obtain $\hat{S}(\Theta_{\mathcal{J}})$,  $\hat{u}(\theta_i)$ to obtain $\hat{S}(\theta_i)$ and $u_0$ to obtain $S_0$.

 Without loss of generality, we may assume that the boundary condition in the flow equation of (\ref{approx-eq1}) is no flow boundary condition.  Let $\hat{k}(\Theta_{\mathcal{J}})={\hmP_{\mathcal{J}}k(\Theta)}$ and $\hat{k}(\theta_i)=\hmP_{i}k(\Theta)$.   Then we can uniformly formulate
  the mixed formulation of equations of  $\hat{u}(\Theta_{\mathcal{J}})$ and $\hat{u}(\theta_i)$   as following,
 \begin{eqnarray}
 \label{mixed-eq1}
\begin{split}
\begin{cases}
& \big(\lambda(\hat{S})\hat{k}\big)^{-1} \hat{u}+\nabla \hat{p} = 0 \ \ \text{in} \ D\\
& \mbox{div}(\hat{u}) = q  \ \ \text{in} \ D \\
& \hat{u} \cdot n = 0 \ \ \text{on} \ \partial D.
 \end{cases}
 \end{split}
\end{eqnarray}
Here $\hat{u}(\Theta_{\mathcal{J}})$ and $\hat{u}(\theta_i)$ are corresponding to the coefficients $\hat{k}(\Theta_{\mathcal{J}})$ and $\hat{k}(\theta_i)$, respectively.
Let $k_0=k(\overline{\Theta})$. Then $u_0$ is the solution to  equation (\ref{mixed-eq1}) if $\hat{k}$ and $\hat{S}$ are replaced by $k_0$ and $S_0$, respectively.

The weak mixed formulation of (\ref{mixed-eq1}) reads:  seek $(\hat{u}, \hat{p})\in H_0(div, D) \times L^2(D)/R$ such that they satisfy the equation
 \begin{eqnarray*}
 \begin{split}
\begin{cases}
& \bigg( \big(\lambda(\hat{S})\hat{k}\big)^{-1}\hat{u},  v\bigg)- \big(\mbox{div}(v), \hat{p}\big) =0   \quad  \forall  v\in H_0(div, D) \\
& \big(\mbox{div} (\hat{u}), r) = (q,   r)\ \ \ \forall r\in L^2(D).
\end{cases}
 \end{split}
\end{eqnarray*}
Let $V_h\subset H_0(\mbox{div}, D)$ and $Q_h\subset L^2(D)/R$ be the finite element  spaces for velocity and pressure, respectively.
Then the numerical mixed formulation of (\ref{mixed-eq1}) is  to find $(\hat{u}_h, \hat{p}_h)\in V_h\times Q_h$ such that they satisfy
\begin{eqnarray}
 \label{num-weak}
 \begin{split}
\begin{cases}
& \bigg( \big(\lambda(\hat{S})\hat{k}\big)^{-1}\hat{u}_h,  v_h\bigg)- \big(\mbox{div}(v_h), \hat{p}_h\big) =0   \quad  \forall  v_h\in V_h \\
& \big(\mbox{div} (\hat{u}_h), r_h) = (q,   r_h)\ \ \ \forall r_h\in Q_h.
\end{cases}
 \end{split}
\end{eqnarray}

We use MMsFEM for (\ref{num-weak}). It means that mixed finite element approximation
is performed on coarse grid, where the multiscale basis functions are defined.
In MMsFEM, we use piecewise constant basis functions on coarse grid for pressure.
For the velocity, we define multiscale velocity basis functions.  The degree of freedom
of multiscale velocity basis function is defined on interface of coarse grid.
Let $e_i^K$ be a generic  edge or  face of the coarse  block $K$. The velocity multiscale basis equation
associated with $e_i^K$ is defined by
\begin{eqnarray}
\label{basis-eq}
\begin{split}
\begin{cases}
&-\mbox{div}(\hat{k} \nabla w_{i}^{K}) =  \frac{1}{|K|} \quad  \text{in}\ K  \\
&-\hat{k}\nabla w_{i}^{K}\cdot n  =  \left\{
\begin{array}{ll}
\frac{b_i^K\cdot n} {\int_{e_i^K} b_i^K \cdot n ds} \ \ \text{on}\ e_i^K \\
0 \ \ \text{else}.
\end{array}%
\right.
\end{cases}
\end{split}
\end{eqnarray}
For local mixed MsFEM \cite{ch03},  $b_i^K=n$, the normal vector. If the media demonstrate strong non-local features including
channels, fracture and shale barriers,  some limited  global information is needed to define the boundary condition $b_i^K$ to improve
 accuracy of approximation \cite{aej08, jem10}. We will specify the boundary condition $b_i^K$ for different parts in (\ref{HDMR-u}).
Then $\psi_i^K=-\hat{k}\nabla w_i^K$  defines  multiscale velocity basis function  associated to $e_i^K$, and the multiscale finite dimensional
space for velocity is defined by
\[
V_h=\bigoplus_{K, i} \psi_i^K.
\]

It is well-known that using approximate single-phase global velocity information can considerably improve accuracy for multiscale simulation of two-phase flows \cite{aa, aej08, jem10,jml}.
For the two-phase flow system (\ref{tp-system}),  the single-phase global velocity $u_{sg}(\Theta)$ solves  the following equation
\begin{eqnarray}
 \label{global-sg}
\begin{split}
\begin{cases}
& \big(k(\Theta)\big)^{-1} u_{sg}+\nabla p_{sg} = 0 \ \ \text{in} \ D\\
& \mbox{div}(u_{sg}) = q  \ \ \text{in} \ D \\
& u_{sg} \cdot n = 0 \ \ \text{on} \ \partial D.
 \end{cases}
 \end{split}
\end{eqnarray}
By using hybrid  HDMR and sparse grid interpolation,  the $u_{sg}(\Theta)$ admits the following approximation
\begin{eqnarray*}
\mathcal{M}_{\mathcal{J}} u_{sg}(\Theta)&=&\hat{u}_{sg}(\Theta_{\mathcal{J}})+ \sum_{i\in \{1,\dots, N\}\setminus \mathcal{J}} \hat{u}_{sg}(\theta_i)-(N-J)u_{sg,0}\\
&\approx & \bigg \{u_{sg,0}+\sum_{j\in\mathcal{J}} u_{sg, j}(\theta_j)\bigg\} + \sum_{i\in \{1,\dots, N\}\setminus \mathcal{J}} \hat{u}_{sg}(\theta_i)-(N-J)u_{sg,0}\\
&=& \bigg\{ (1-J)u_{sg,0}+\sum_{j\in\mathcal{J}} \hat{u}_{sg}(\theta_j)\bigg\}+\sum_{i\in \{1,\dots, N\}\setminus \mathcal{J}} \hat{u}_{sg}(\theta_i)-(N-J)u_{sg,0}\\
&\approx & \bigg\{ (1-J)u_{sg,0}+\sum_{j\in\mathcal{J}} \mathcal{A}(1+\ell, 1)\big[\hat{u}_{sg}(\theta_j)\big] \bigg\} \\
&+&\sum_{i\in \{1,\dots, N\}\setminus \mathcal{J}} \mathcal{A}(1+\ell', 1)\big[\hat{u}_{sg}(\theta_i)\big]-(N-J)u_{sg,0}
\end{eqnarray*}
where $\hat{u}_{sg}(\theta_j)=\hmP_{j}u_{sg}(\Theta)$ and $\hat{u}_{sg}(\theta_i)=\hmP_{i}u_{sg}(\Theta)$.  Here the interpolation levels
$\ell$ and $\ell'$ can be different. Because $\{\theta_j\}_{j\in \mathcal{J}}$ are the most active dimensions,  it is usually desirable that $\ell\geq \ell'$.
Now we are ready to specifically describe the multiscale finite element space for $\hat{u}(\Theta_{\mathcal{J}})$,  $\hat{u}(\theta_i)$ ($i\in \{1,\dots, N\}\setminus \mathcal{J}$) and $u_0$.
\begin{itemize}
\item To construct the multiscale basis functions for $\hat{u}(\Theta_{\mathcal{J}})$,  we take $\hat{k}=\hat{k}(\Theta_{\mathcal{J}})$ in (\ref{basis-eq}) and
\begin{equation}
\label{bc1}
b_i^K=b_i^K(\Theta_{\mathcal{J}})=\bigg( (1-J)u_{sg,0}+\sum_{j\in\mathcal{J}} \mathcal{A}(1+\ell, 1)\big[\hat{u}_{sg}(\theta_j)\big] \bigg)|_{e_i^K}.
\end{equation}
The multiscale finite element space for $\hat{u}(\Theta_{\mathcal{J}})$ is defined by
\begin{equation}
\label{MsFS1}
V_h(\Theta_{\mathcal{J}})=\bigoplus_{K, i} \psi_i^K(\Theta_{\mathcal{J}}), \quad \text{where $\psi_i^K(\Theta_{\mathcal{J}})=-\hat{k}(\Theta_{\mathcal{J}})\nabla w_i^K(\Theta_{\mathcal{J}})$}.
\end{equation}
\item To construct the multiscale basis functions for $\hat{u}(\theta_i)$  ($i\in \{1,\dots, N\}\setminus \mathcal{J}$),  we take $\hat{k}=\hat{k}(\theta_i)$ in (\ref{basis-eq}) and
\begin{equation}
\label{bc2}
b_i^K=b_i^K(\theta_i)=\bigg(\mathcal{A}(1+\ell', 1)\big[\hat{u}_{sg}(\theta_i)\big]\bigg)|_{e_i^K}.
\end{equation}
The multiscale finite element space for $\hat{u}(\theta_i)$ is defined by
\begin{equation}
\label{MsFS2}
V_h(\theta_i)=\bigoplus_{K, i} \psi_i^K(\theta_i), \quad \text{where $\psi_i^K(\theta_i)=-\hat{k}(\theta_i)\nabla w_i^K(\theta_i)$}.
\end{equation}
\item To construct the multiscale basis functions for $u_0$,  we replace  $\hat{k}$ by $k(\overline{\Theta})$ in (\ref{basis-eq}) and
\[
b_i^K=u_{sg,0}|_{e_i^K}.
\]
The multiscale finite element space for $u_0$ is defined by
\[
V_h(\overline{\Theta})=\bigoplus_{K, i} \psi_i^K(\overline{\Theta}), \quad \text{where $\psi_i^K(\overline{\Theta})=-k(\overline{\Theta})\nabla w_i^K(\overline{\Theta})$}.
\]
\end{itemize}

For different parts of the hybrid  HDMR expansion $\mathcal{M}_{\mathcal{J}} u(\Theta)$ defined in (\ref{HDMR-u}), we use different boundary conditions for  multiscale basis equations. This
increases the hierarchies of multiscale basis functions, which are in tune with the sensitivity of random parameter dimensions.

By (\ref{mean-comp2}), to compute the  moments  of outputs we only need to construct the multiscale finite element space $V_h(\Theta_{\mathcal{J}})$ at the set of collocation point $\{\Theta_{\mathcal{J}}^{(j)}\}_{j=1}^{N_J}$, where $N_J=H(J+\ell, J)$.  For arbitrary  $\Theta_{\mathcal{J}}\in I^J$, the boundary condition $b_i^K(\Theta_{\mathcal{J}})$
is completely determined by $u_{sg}(\theta_j^{(m)})$, where $j\in \mathcal{J}$ and $m=1,\dots, H(1+\ell, 1)$. Here $\{\theta_j^{(m)}\}$ is a set of collocation points in $I$.
Similarly,    we also need to construct the multiscale finite element space $V_h(\theta_i)$ at the set of 1-dimensional  collocation points $\{ \theta_i^{(j')}\}$, where $i\in \{1,\dots, N\}\setminus \mathcal{J}$ and $j'=1,\dots,  H(1+\ell, 1)$.   For arbitrary  $\theta_i\in I$, the boundary condition $b_i^K(\theta_i)$
is completely determined by $u_{sg}(\theta_i^{(m')})$, where $m'=1,\dots, H(1+\ell', 1)$ and $\{\theta_j^{(m')}\}$ is a set of collocation points in $I$.
These single-phase global velocity information $\{u_{sg}(\theta_j^{(m)}), u_{sg}(\theta_i^{(m')})\}$ is pre-computed  and can be repeatedly used in
the whole stochastic flow simulations.

The convergence analysis for the mixed MsFEM using approximate global information can be found in \cite{jem10,jml}.
From the above discussion, here the approximate global information is completely determined by stochastic single-phase velocity contributed from individual random dimensions $\theta_i$ ($i=1, \dots, N$). According to the analysis in \cite{jem10}, the resonance error between the coarse mesh size $h$ and the effect of individual contributions of $\theta_i$ ($i=1, \dots, N$)
 is negligible.  This error is a major source using local MMsFEM \cite{jml}.  However, the resonance error between the coarse mesh size $h$ and the effect of cooperative  contributions of $\Theta$ is retained, but is usually  small.  This is our motivation to use limited global information determined  in 1-dimensional random space.
If the random permeability field  has low variance and does not exhibit  global features,  we can use local mixed MsFEM for
the stochastic simulation.

Define
\[
\widetilde{u}_h(x, \Theta):=\mathcal{A}(J+\ell, J)\big[\hat{u}_h(x, \Theta_{\mathcal{J}})\big]+\sum_{i\in \{1,\dots, N\}\setminus \mathcal{J}}\mathcal{A}(1+\ell, 1)\big[ \hat{u}_h(x,\theta_i)\big]-(N-J)u_{0,h}(x).
\]
For velocity, we actually compute $\widetilde{u}_h(x, \Theta)$, namely,  the numerical solutions  $\hat{u}_h(x, \Theta_{\mathcal{J}})$ and $\hat{u}_h(x,\theta_i)$ are evaluated  at their collocation points.
We define $\widetilde{u}(x, \Theta)$ in the same way.
Now we analyze the total error between $u(x, \Theta)$ and $\widetilde{u}_h(x, \Theta)$. By triangle inequality,
\begin{eqnarray}
\begin{split}
&E\big[ \|u(x, \Theta)-\widetilde{u}_h(x, \Theta)\|_{L^2(D)}\big]\\
&\leq E\big[ \|u(x, \Theta)-\mathcal{M}_{\mathcal{J}}u(x, \Theta)\|_{L^2(D)}\big]
+E\big[ \|\mathcal{M}_{\mathcal{J}}u(x, \Theta)-\widetilde{u}(x, \Theta)\|_{L^2(D)}\big]\\
&+E\big[ \|\widetilde{u}(x, \Theta)-\widetilde{u}_h(x, \Theta)\|_{L^2(D)}\big]\\
&:= \mathcal{E}_{hdmr}+\mathcal{E}_{col}+\mathcal{E}_{ms},
\end{split}
\end{eqnarray}
where $\mathcal{E}_{hdmr}$ represents the error from hybrid HDMR, $\mathcal{E}_{col}$ the error by sparse grid collocation and $\mathcal{E}_{ms}$
the error of MMsFEM.  The error $\mathcal{E}_{hdmr}$ depends on the choice of the most active dimensions $\mathcal{J}$,  the error $\mathcal{E}_{col}$ depends
on the number of collocation points and random dimensions,  and the error  $\mathcal{E}_{ms}$ relies on the coarse mesh size $h$ and the approximation of the boundary conditions (\ref{bc1}) and (\ref{bc2}) to $\hat{u}_{sg}(\Theta_{\mathcal{J}})$ and
$\hat{u}_{sg}(\theta_i)$, respectively.  According to the works in \cite{aej08, jem10}, if the boundary conditions in (\ref{bc1}) and (\ref{bc2}) are replaced by $\hat{u}_{sg}(\Theta_{\mathcal{J}})$ and
$\hat{u}_{sg}(\theta_i)$, respectively,  then  the  error $\mathcal{E}_{ms}$ only depends on coarse mesh size $h$.
The error $\mathcal{E}_{col}$  has been extensively discussed in many recent literatures, e.g., \cite{bnt07, bnr00, xiu05}, and we focus on the errors $\mathcal{E}_{hdmr}$ and $\mathcal{E}_{ms}$
in the paper.

\section{Numerical results}
\label{sect-num}

In this section, we present numerical results for two-phase flow in
random porous media.  The hybrid HDMR and MMsFEM are incorporated to
enhance simulation efficiency.  For random porous media with non-local
spatial features and high variance, we use limited global information
to capture the multiscale velocity information and achieve better
accuracy.  We combine MMsFEM with the two different truncated HDMR
techniques techniques (hybrid HDMR $\mathcal{M}_{\mathcal{J}}$ and
adaptive HDMR $\mathcal{M}^{ad}_{\mathcal{J}}$) for the flow
simulations. The accuracy and efficiency of the different methods will
be compared.

For numerical simulation,  we assume that $k(x, \Theta)$ is a logarithmic random  field, i.e., $k(x, \Theta):=\exp(a(x, \Theta))$. This
assumption assures the positivity of  $k(x, \Theta)$ and the
well-posedness of the flow equation in (\ref{tp-system}).
Here  $a(x, \omega)$ is a stochastic field  and can be characterized by its covariance function
$\mbox{cov}[a] $: $\bar{D}\times \bar{D} \longrightarrow \mathbb{R}$
by
\[
\mbox{cov}[a](x_1,x_2):=E\big[\big(a(x_1)-E[a(x_1)]\big)\big(a(x_2)-E[a(x_2)]\big)\big].
\]
For the numerical experiments,  we use a two point exponential covariance  function for the stochastic field  $a$,
\begin{equation}
\label{exp-cov}
\mbox{cov}[a](x_1,y_1;x_2,y_2)=\sigma^2\exp\left( -\frac{\mid x_1-x_2\mid^2}{2l_x^2}-\frac{\mid y_1-y_2\mid^2}{2l_y^2}\right),
\end{equation}
where $(x_i, y_i)$ ($i=1,2$) is the spatial coordinate in 2D,
$\sigma^2$ is the variance of stochastic field $a$, and  $l_x$ and
$l_y$ denote the correlation length in the $x-$ and $y-$direction,
respectively. We will specify these parameters to define the covariance function in numerical examples.
Using  Karhunen-Lo\`{e}ve expansion (KLE)   the random field $a(x, \Theta)$ admits the following decomposition
\begin{equation} \label{KLE}
a(x, \Theta):= E[a]+\sum_{i=1}^N \sqrt{\lambda_i} b_i(x) \theta_i,
\end{equation}
where the random vector $\Theta:=(\theta_1, \theta_2,\cdots,
\theta_N)\in \mathbb{R}^N$ and the random variables
$\{\theta_i(\omega)\}_{i=1}^N$ are mutually orthogonal and have zero
mean and unit variance.
The effects of permeability field
$a(x,\Theta)$ with uniform, beta and Gaussian distributions on the mean
and standard deviation of output were discussed in \cite{Lt10}, where
the numerical results showed similar peak values of standard
deviation for the three different distributions.  Therefore,
in this section we assume for numerical convenience
that each $\theta_i$ is i.i.d. and uniform on $[-1,1]$.
Although, this assumption leads to potentially less physical
fields, it generates variability suitable for evaluating
the performance of the proposed methods.

When MMsFEM is used, the fine grid is coarsened to form a uniform coarse grid.
We solve the pressure equation on the coarse grid using
MMsFEM and then reconstruct the fine-scale velocity field as a superposition
of the multiscale basis functions. The reconstructed velocity field is used to solve the
saturation equation with a finite volume method on the fine grid. We solve the two-phase flow
system (\ref{tp-system}) using the classical IMPES (Implicit Pressure Explicit Saturation).  The temporal discretization is an implicit scheme, which is unconditionally stable but leads to  a nonlinear system (Newton-Raphson
iteration solves the nonlinear system).   In all numerical simulations, mixed multiscale basis functions are constructed
once at the beginning of  computation. In the discussion,
we refer to the grid where multiscale basis functions are constructed
as the coarse grid. Limited global information is computed on the fine grid.

We compare the saturation fields and
water-cut data as a function of pore volume injected (PVI).
The water-cut  is defined as the fraction of water in
the produced fluid and is given by $q_w/q_t$, where $q_t=q_o+q_w$,
with $q_o$ and $q_w$ being the flow rates of oil
and water at the production edge of the model.
In particular, $q_w=\int_{\partial D^{out}} f(S)
{u}\cdot { n} ds$,
$q_t=\int_{\partial \Omega^{out}} {u}\cdot { n} ds$,
where ${\partial D^{out}}$ is the out-flow boundary.
Pore volume injected is defined as $\mbox{PVI}={1 \over V_p} \int_0^t
q_t(\tau) d\tau$, with $V_p$ being the total pore volume of
the system, and provides the dimensionless time for the displacement.
We consider a traditional quarter
five-spot
problem (e.g., \cite{aa}) on a square domain $D$,  where the water is injected at
 left-top corner
and oil is produced at the right-lower corner of the rectangular domain (in the 2D examples).

\subsection{Linear transport}
\label{sect-linear}
In this subsection, we consider the case when the transport (saturation)  equation  is linear in the flow system (\ref{tp-system}).
For this purpose, we consider the mobility of water and mobility of oil defined as following,
\[
\lambda_w(S)=S,  \quad  \lambda_o(S)=1-S.
\]
Consequently, the total mobility $\lambda(S)=1$ and the fractional  flow of water $f_w(S)=S$. Then the two-phase flow system (\ref{tp-system}) reduces to
a linear flow model. Since $\lambda(S)=1$,  the velocity is not updated to compute saturation.  We use the example to investigate the performance of the hybrid
HDMR technique and MMsFEM for linear models.

 The stochastic permeability field $k(x, \Theta)$ is given by  $k(x, \Theta)=\exp\big(a(x,\Theta)\big)$, where $a(x,\Theta)$ is characterized by the covariance function $\mbox{cov}[a]$ defined in (\ref{exp-cov}), whose  parameters are defined as follows: $\sigma^2=1$, $l_x=l_y=0.1$.  We note that the
deceay rate of the eigenvalues in the KLE (\ref{KLE}) expansion depends inversely on the correlation length and is often used as a guideline to determine the number of terms, $N$
\cite{xiu10}.  Here the decay rate is relatively slow  and we
use 80 terms to represent the random field $a(x,\Theta)$ with a relative error
of less than 5\%.
This implies that $k(x, \Theta)$ is defined in an $80$-dimensional random parameter space, i.e., $N=80$.  Fig. $\ref{Fig1.1}$ depicts a realization of the random field $a(x, \Theta)$.
 The stochastic field $k(x, \Theta)$ is defined in a $60\times 60$ fine grid. We choose $6\times 6$ coarse grid and apply MMsFEM to compute velocity.
The time step is taken to be $0.02$ PVI for discretizing temporal variable.

To demonstrate the efficacy of the proposed method to simulate the
stochastic linear two-phase flow system, we compare the performance of
several methods: fine-scale mixed finite element method (MFEM) with
full random dimensions, MMsFEM on full random dimensions, MMsFEM based
on adaptive HDMR, and MMsFEM based on hybrid HDMR. The solution
computed by fine-scale MFEM with the full stochastic space is referred
to as the reference solution in this paper.  To assess the performance
of the various MMsFEM and HDMR combinations, we compute the mean and
standard deviation (std) for the quantifies of interest from the
two-phase flow model, such as saturation and water-cut.  Since the
random field in the example does not have strong non-local features,
the local MMsFEM (L-MMsFEM) generally gives an accurate approximation
\cite{jem10}, and it is used in this example. To choose the most
active random dimensions for adaptive HDMR and hybrid HDMR, we choose
a threshold constant $\zeta=0.9$ and use criteria (\ref{find-J}).
This approach produces $31$ most active dimensions.  The Smolyak
sparse-grid collocation method with level $2$ is used to tackle
the stochastic space.  We evaluate the flow model's outputs at the
collocation points and use the associated weights to compute the mean
and standard deviation of the outputs.  Table $\ref{tab1-sect1}$ lists
the number of deterministic models to be solved for the above four
different methods.  From the table, we see that MFEM on full random
dimensions needs to compute the largest number of deterministic models
(fine-scale models), MMsFEM based on hybrid HDMR needs to compute the
smallest number of deterministic models (coarse-scale models).  This
means that MMsFEM based on hybrid HDMR significantly reduces the
computational complexity.

\begin{table}[hbtp]
\centering \caption{number of deterministic models for the different methods}
\begin{tabular}{|c|c|}
\hline
methods                    &    number of deterministic models    \\
\hline
MFEM on full random dim.   &      12961 fine-scale models\\
\hline
MMsFEM on full random dim   &    12961    coarse-scale models\\
\hline
MMsFEM based on adaptive HDMR &     6446   coarse-scale models\\
\hline
MMsFEM based on hybrid HDMR      &     2231    coarse-scale models\\
\hline
\end{tabular}
\label{tab1-sect1}
\end{table}

Fig. \ref{Fig1.2} shows the point-wise mean of the saturation field at
$\mbox{PVI}=0.4$.  In this figure, we observe that both L-MMsFEM based
on the hybrid HDMR and L-MMsFEM based on the adaptive HDMR are almost
identical to the reference mean computed by fine-scale MFEM on full
random dimensions.  Fig. \ref{Fig1.3} illustrates the point-wise
standard deviation of saturation at $\mbox{PVI}=0.4$ and leads to four
observations: (1) Compared with reference standard deviation, the
approximations of standard deviation by the three multiscale models
(L-MMsFEM on full random dim., L-MMsFEM based on hybrid HDMR and
L-MMsFEM based on adaptive HDMR) are good; (2) L-MMsFEM on full random
dim. renders better approximation for standard deviation than L-MMsFEM
based on truncated HDMR techniques; (3) L-MMsFEM based on hybrid HDMR
gives almost the same standard deviation as the L-MMsFEM based on
adaptive HDMR; (4) The variance mostly occurs along the advancing
water front, and where the front interacts with the domain boundary.
Fig. \ref{Fig1.4} shows the mean of water-cut curves, which are all
nearly identical. This demonstrates that a good approximation has been
achieved.  Fig. \ref{Fig1.5} shows the standard deviation of water-cut
curves from the four different methods.  Here, we see that both
L-MMsFEM based on hybrid HDMR and L-MMsFEM based on adaptive HDMR
produce a very good approximation for the standard deviation at most
time instances.  The standard deviation of the water-cut curve from
L-MMsFEM based on hybrid HDMR is almost identical to the standard
deviation of water-cut curve from L-MMsFEM based on adaptive HDMR.

Next we discuss the relative errors of the domain integrated mean and
standard deviation.  Let $S_{r}$/$W_r$, $S_{ms, f}$/$W_{ms,f}$,
$S_{ms, aH}$/$W_{ms, aH}$ and $S_{ms, hH}$/$W_{ms, hH}$ be the
saturation/water-cut using MFEM on full random dimensions, MMsFEM on
full random dimensions, MMsFEM based on adaptive HDMR and MMsFEM based
on hybrid HDMR, respectively. Then we define the relative errors of
saturation mean and saturation standard deviation between $S_{r}$ and
$S_{ms, f}$ as following,
\begin{equation}
\label{relative-error-S}
\mathcal{E}_{ms}^{m}(S)=\frac{\|E[S_r](x)-E[S_{ms, f}](x)\|_{L^1(D)}}{\|E[S_r](x)\|_{L^1(D)}},  \quad
\mathcal{E}_{ms}^{std}(S)=\frac{\|\sigma[S_r](x)-\sigma[S_{ms, f}](x)\|_{L^1(D)}}{\|\sigma[S_r](x)\|_{L^1(D)}},
\end{equation}
where $\sigma$ denotes the standard deviation operation.  We can similarly define the relative errors $\mathcal{E}_{aH}^{m}(S)$  and $\mathcal{E}_{aH}^{std}(S)$  between $S_{r}$ and $S_{ms, aH}$,
and the relative errors $\mathcal{E}_{hH}^{m}(S)$ and $\mathcal{E}_{hH}^{std}(S)$ between $S_{r}$ and $S_{ms, hH}$.
We list these errors on saturation at $0.4$ PVI in Table $\ref{tab2-sect1}$. From the table, we see that L-MMsFEM based on hybrid HDMR gives the same accuracy of mean as L-MMsFEM based on adaptive HDMR. They are both very close to
the error of mean by L-MMsFEM on full random dimensions. The relative error of standard deviation by L-MMsFEM based on hybrid HDMR also has a good agreement with that of L-MMsFEM based on adaptive HDMR.
 The table implies  that the errors only from   adaptive HDMR and  hybrid HDMR are very small. The main source of errors is from spatial multiscale approximation.
\begin{table}[hbtp]
\centering \caption{relative errors of mean and std. on saturation (at 0.4 PVI) }
\begin{tabular}{|c|c|c|}
\hline
methods                    &    relative error of mean    & relative error of std.    \\
\hline
L-MMsFEM on full random dim.   &   1.223540e-002                 &   5.302182e-002     \\
\hline
L-MMsFEM based on adaptive HDMR &  1.361910e-002                &  9.389291e-002        \\
\hline
L-MMsFEM based on hybrid HDMR      &   1.361910e-002               &   9.482789e-002       \\
\hline
\end{tabular}
\label{tab2-sect1}
\end{table}

For water-cut, the relative errors of  mean and  standard deviation between $W_{r}$ and $W_{ms, f}$ are defined by
\begin{equation}
\label{relative-error-W}
\mathcal{E}_{ms}^{m}(W)=\frac{\|E[W_r](t)-E[W_{ms, f}](t)\|_{L^2([0, 1]}}{\|E[W_r](t)\|_{L^2([0, 1])}},  \quad
\mathcal{E}_{ms}^{std}(W)=\frac{\|\sigma[W_r](t)-\sigma[W_{ms, f}](t)\|_{L^2([0,1])}}{\|\sigma[W_r](t)\|_{L^2([0,1])}},
\end{equation}
where $[0,1]$ is the temporal domain. The relative errors of water-cut, such as  $\mathcal{E}_{aH}^{m}(W)$, $\mathcal{E}_{aH}^{std}(W)$, $\mathcal{E}_{hH}^{m}(W)$ and $\mathcal{E}_{hH}^{std}(W)$,
are defined in a similar way. We list these relative errors in Table \ref{tab3-sect1}.  The table shows similar behavior for these methods as was observed in the calculation of saturation mean and saturation standard deviation.  The performance
of L-MMsFEM based on hybrid HDMR  is almost the same as the performance of L-MMsFEM based on adaptive HDMR.
\begin{table}[hbtp]
\centering \caption{relative errors of mean and std. on water-cut}
\begin{tabular}{|c|c|c|}
\hline
methods                    &    relative error of mean    & relative error of std.    \\
\hline
L-MMsFEM on full random dim.   &   1.445791e-002                  &   4.339328e-002      \\
\hline
L-MMsFEM based on adaptive HDMR &  1.856908e-002                  &   1.006136e-001         \\
\hline
L-MMsFEM based on hybrid HDMR      &   1.856908e-002                 &   1.040452e-001        \\
\hline
\end{tabular}
\label{tab3-sect1}
\end{table}

\begin{figure}[tbp]
\centering
\includegraphics[width=5in, height=2in]{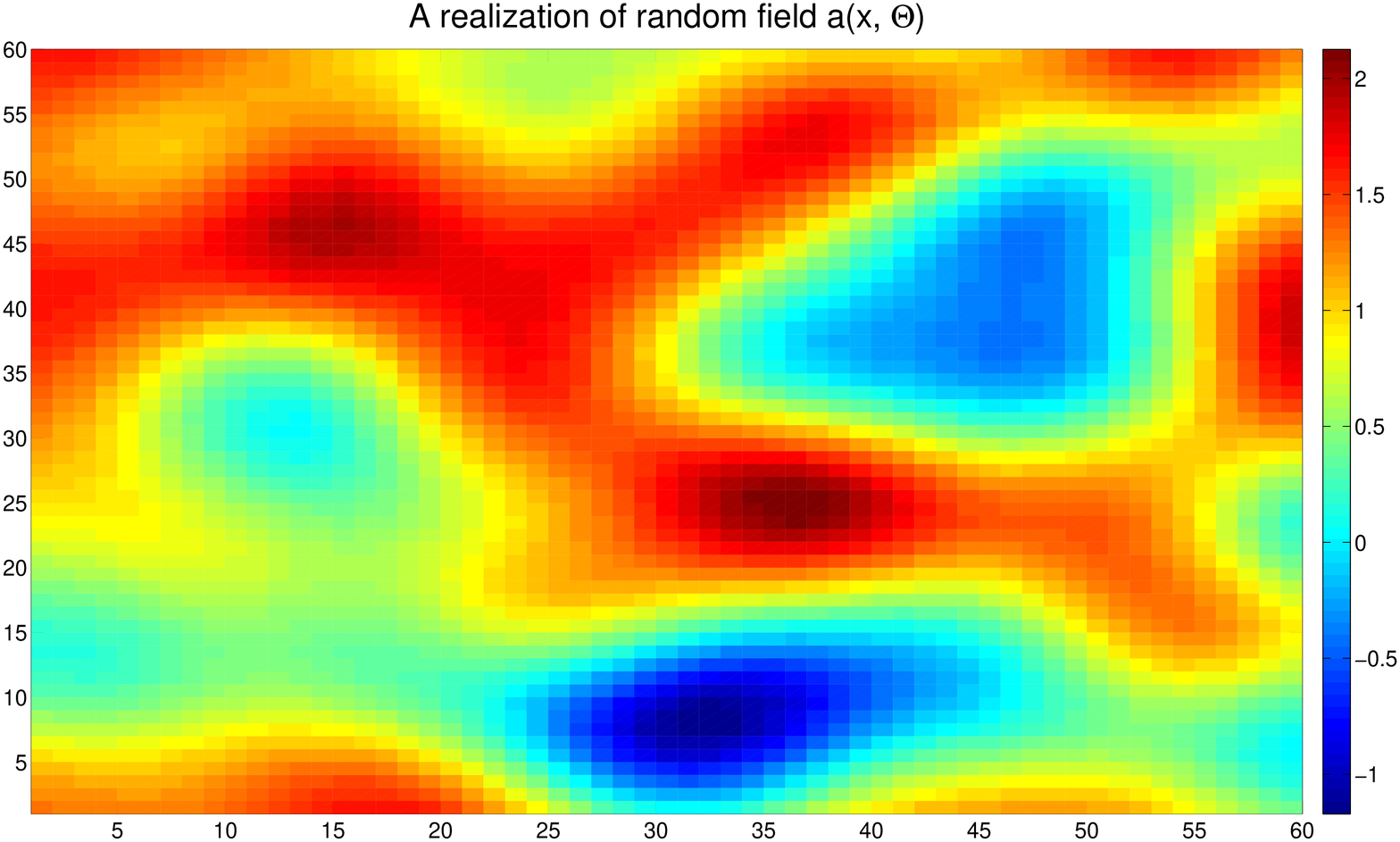}
\caption{A realization of random field $a(x, \Theta)$}
\label{Fig1.1}
\end{figure}

\begin{figure}[tbp]
\centering
\includegraphics[width=6in, height=4in]{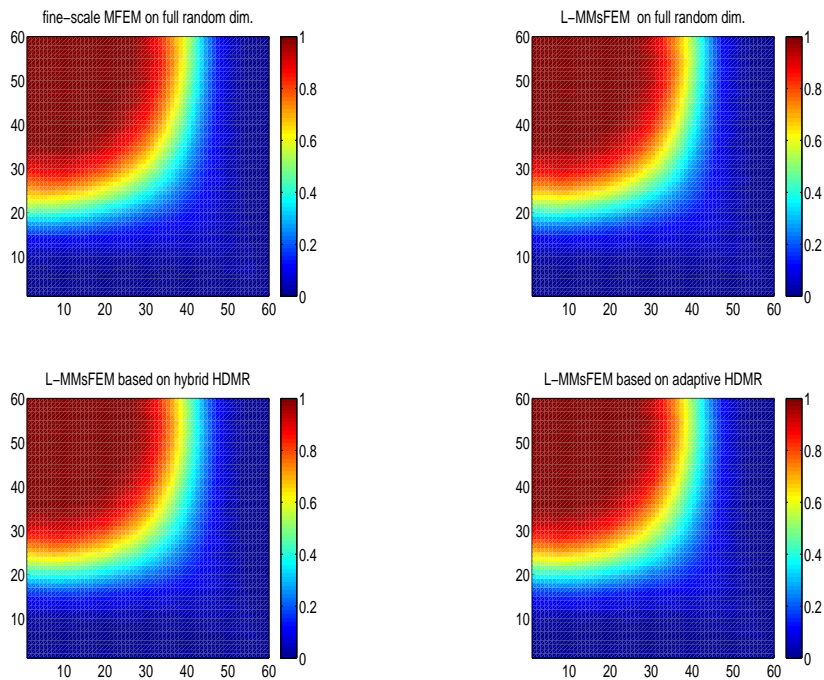}
\caption{Point-wise mean of saturation map at PVI=0.4. Top left: MFEM on full random dimensions; Top right: L-MMsFEM on full random dimensions; Bottom left: L-MMsFEM based on hybrid HDMR;
Bottom right: L-MMsFEM based on adaptive HDMR}
\label{Fig1.2}
\end{figure}

\begin{figure}[tbp]
\centering
\includegraphics[width=6in, height=4in]{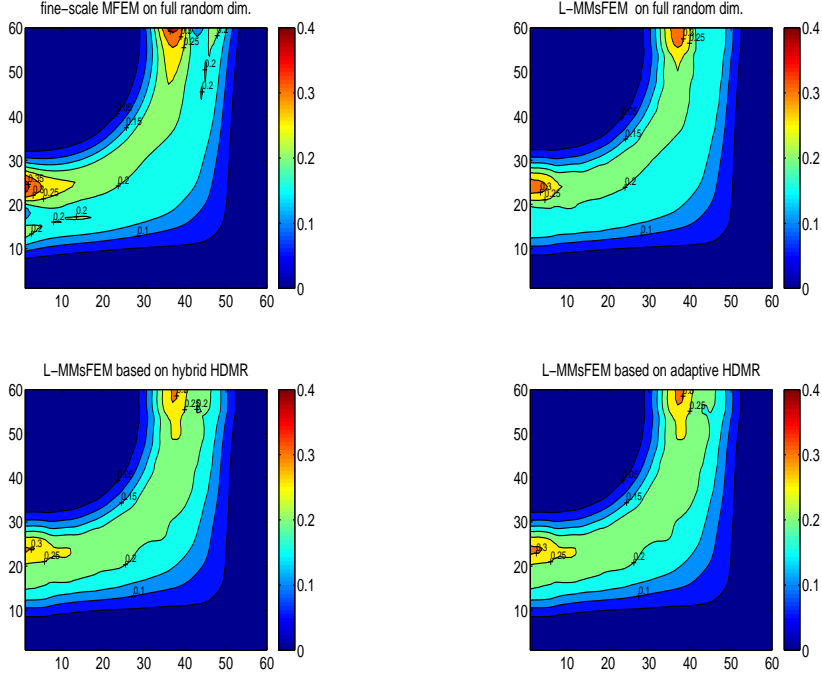}
\caption{Point-wise standard deviation of saturation contour  at PVI=0.4. Top left: MFEM on full random dimensions; Top right: L-MMsFEM on full random dimensions; Bottom left: L-MMsFEM based on hybrid HDMR;
Bottom right: L-MMsFEM based on adaptive HDMR}
\label{Fig1.3}
\end{figure}

\begin{figure}[tbp]
\centering
\includegraphics[width=5in, height=2.5in]{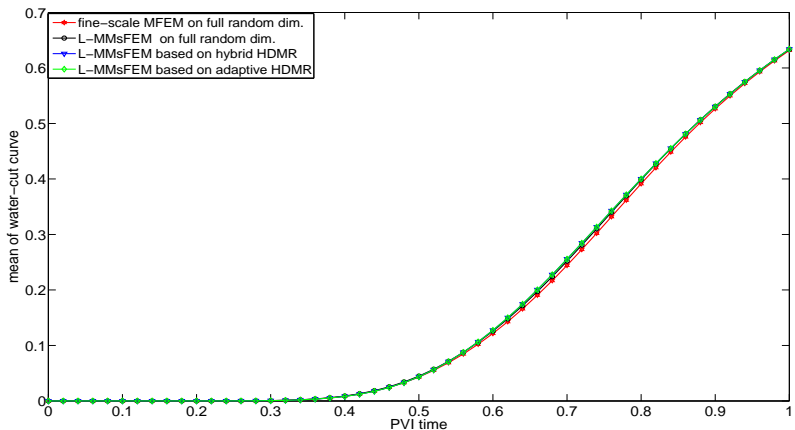}
\caption{Mean of water-cut curves}
\label{Fig1.4}
\end{figure}

\begin{figure}[tbp]
\centering
\includegraphics [width=5in, height=2.5in]{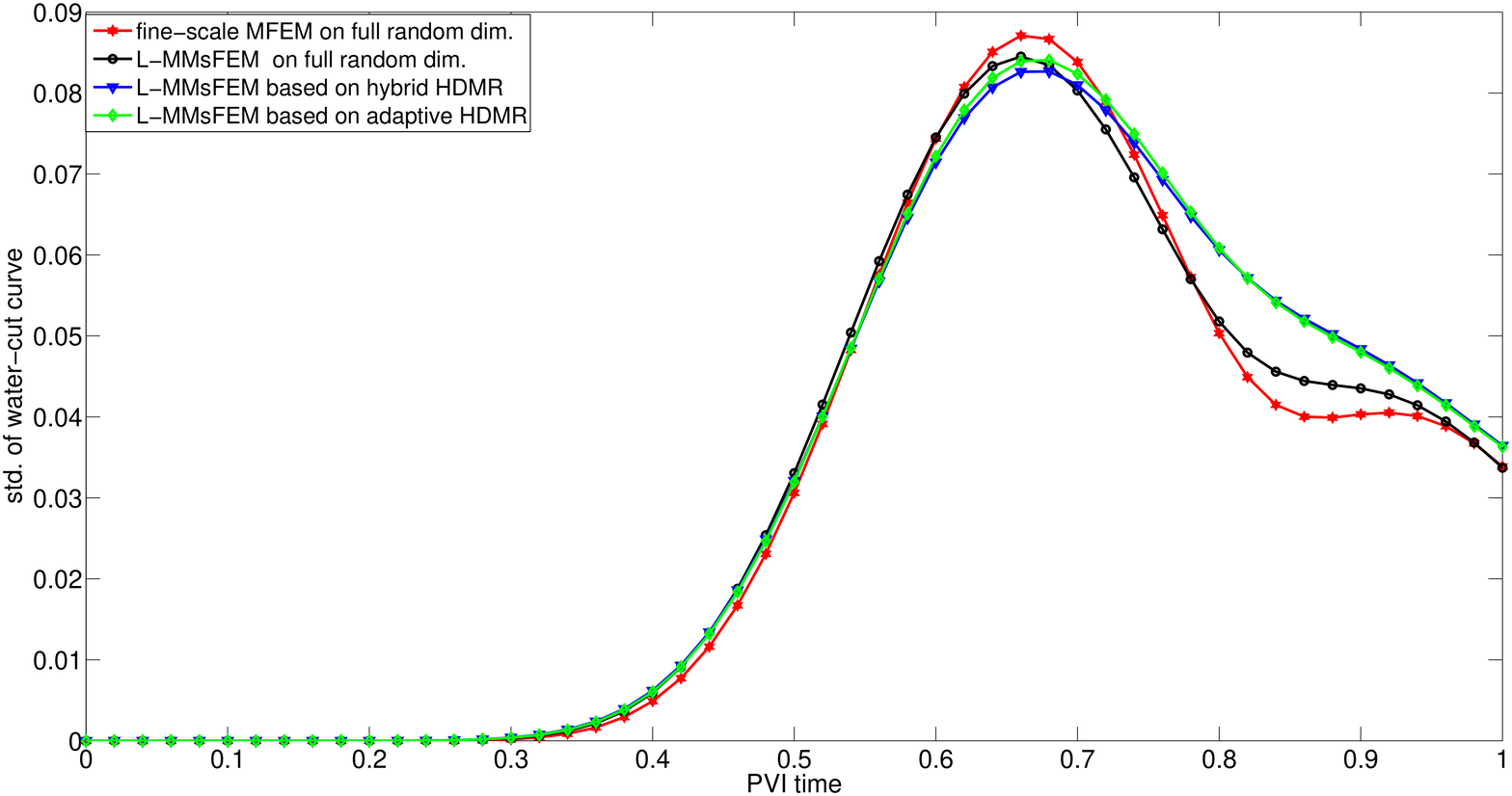}
\caption{Standard deviation  of water-cut curves}
\label{Fig1.5}
\end{figure}

The number of the most active dimensions may have an important impact
on both the computational efficiency and solution accuracy.  To this
end, we investigate the performance for different threshold constants
in (\ref{find-J}).  Specifically, we take $\zeta=0.75$, $0.8$, $0.85$,
$0.9$, $0.95$.  For these threshold constants, we find that the
corresponding number of most active dimensions are
$dim(\Theta_{\mathcal{J}})=19$, $22$, $26$, $31$, $41$.
Fig.~\ref{Fig1.6} displays the CPU times of L-MMsFEM based on adaptive
HDMR and L-MMsFEM based on hybrid HDMR for this set of active
dimensions. Here, two observations are important to note: (1) The CPU
time of L-MMsFEM based on hybrid HDMR is only a fraction of the CPU
time using L-MMsFEM based on adaptive HDMR; (2) As the number of the
most active dimensions increases, the CPU time of L-MMsFEM based on
hybrid HDMR increases mildly, but the CPU time of L-MMsFEM based on
adaptive HDMR increases dramatically.  This comparison demonstrates
that hybrid HDMR is much more efficient than adaptive HDMR.  There are
two reasons that adaptive HDMR requires more CPU time: (1) adaptive
HDMR needs to solve more deterministic models; (2) to compute the
standard deviation (or variance), a large number of stochastic
interpolations are involved.  Fig. \ref{Fig1.7} shows the relative
errors of mean (left) and standard deviation (right) for saturation at
$0.4$ PVI.  From the figure, we conclude that increasing the number of
the most active dimensions can substantially improve the accuracy of
the mean and standard deviation for saturation.  We also see that
for the saturation approximation the
L-MMsFEM based on hybrid HDMR has almost the same accuracy as the
L-MMsFEM based on adaptive HDMR.
Fig. \ref{Fig1.8} presents the relative errors of water-cut mean
(left) and water-cut standard deviation (right) for the different
number of the most active dimensions.  From the figure, we observe
that the reduction in the error with increasing $dim(\Theta_{\mathcal{J}})$ is
smaller for water-cut than for mean saturation.  Here,
the error from L-MMsFEM is dominating the total error, and hence,
is overwhelming the impact of improved accuracy in the HDMR itself.

\begin{figure}[tbp]
\centering
\includegraphics [width=5in, height=2.5in]{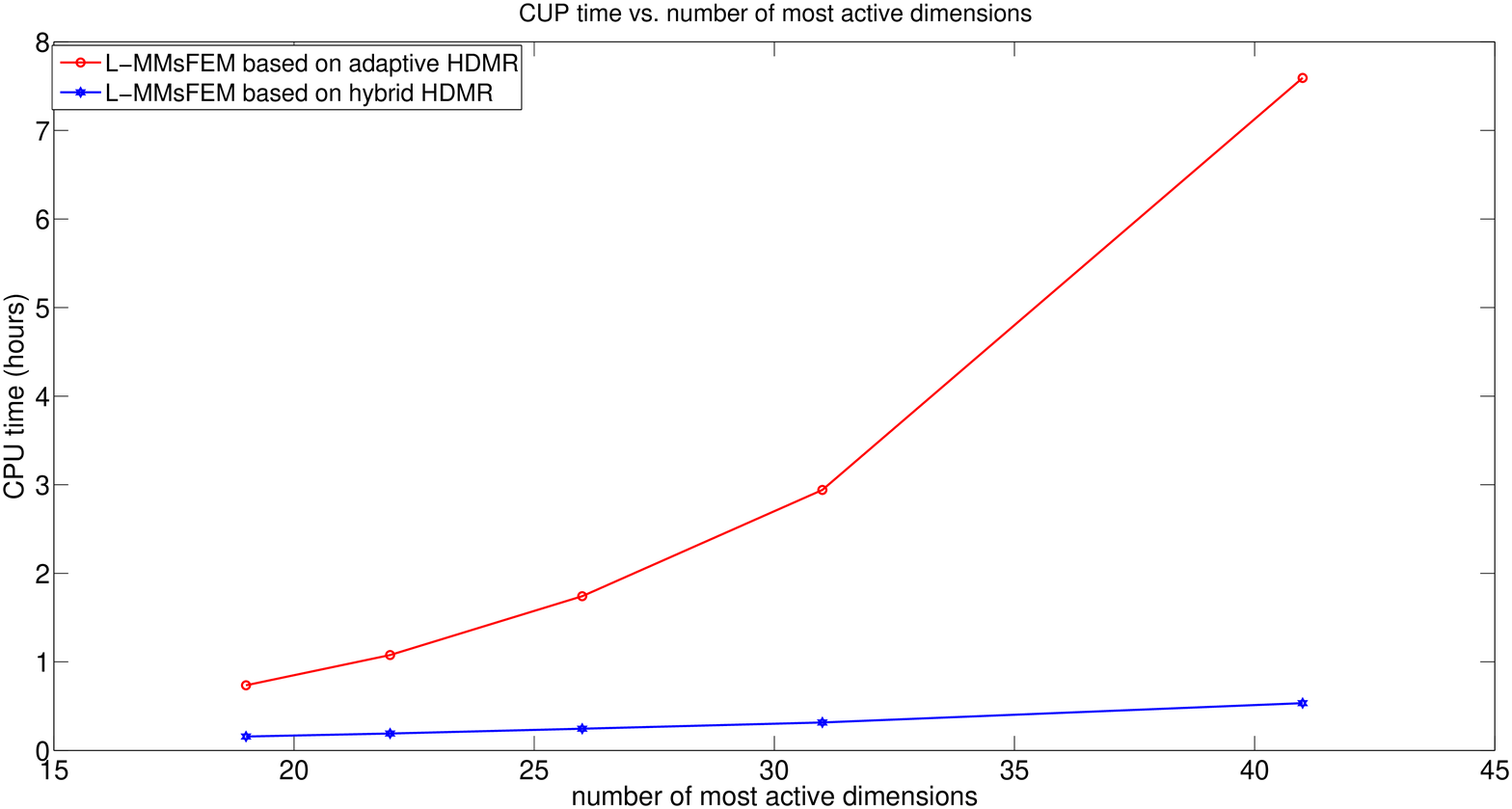}
\caption{CPU time for the different number of the most active dimensions, $19, 22,26,31,41$.}
\label{Fig1.6}
\end{figure}

\begin{figure}[tbp]
\centering
\includegraphics [width=5in, height=2.5in]{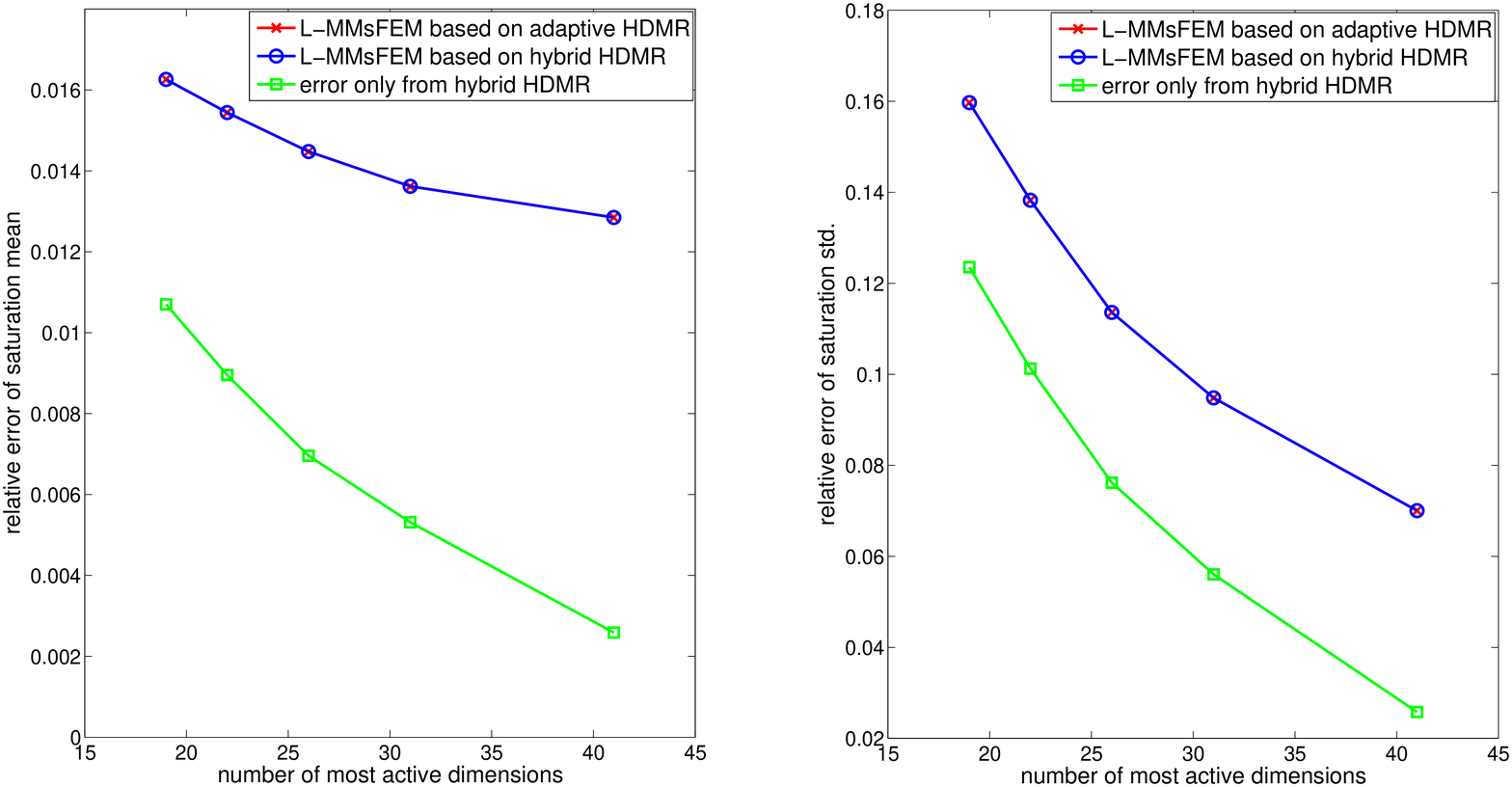}
\caption{Relative errors of mean (left) and standard deviation (right) for saturation at $0.4$ PVI.}
\label{Fig1.7}
\end{figure}

\begin{figure}[tbp]
\centering
\includegraphics [width=5in, height=2.5in]{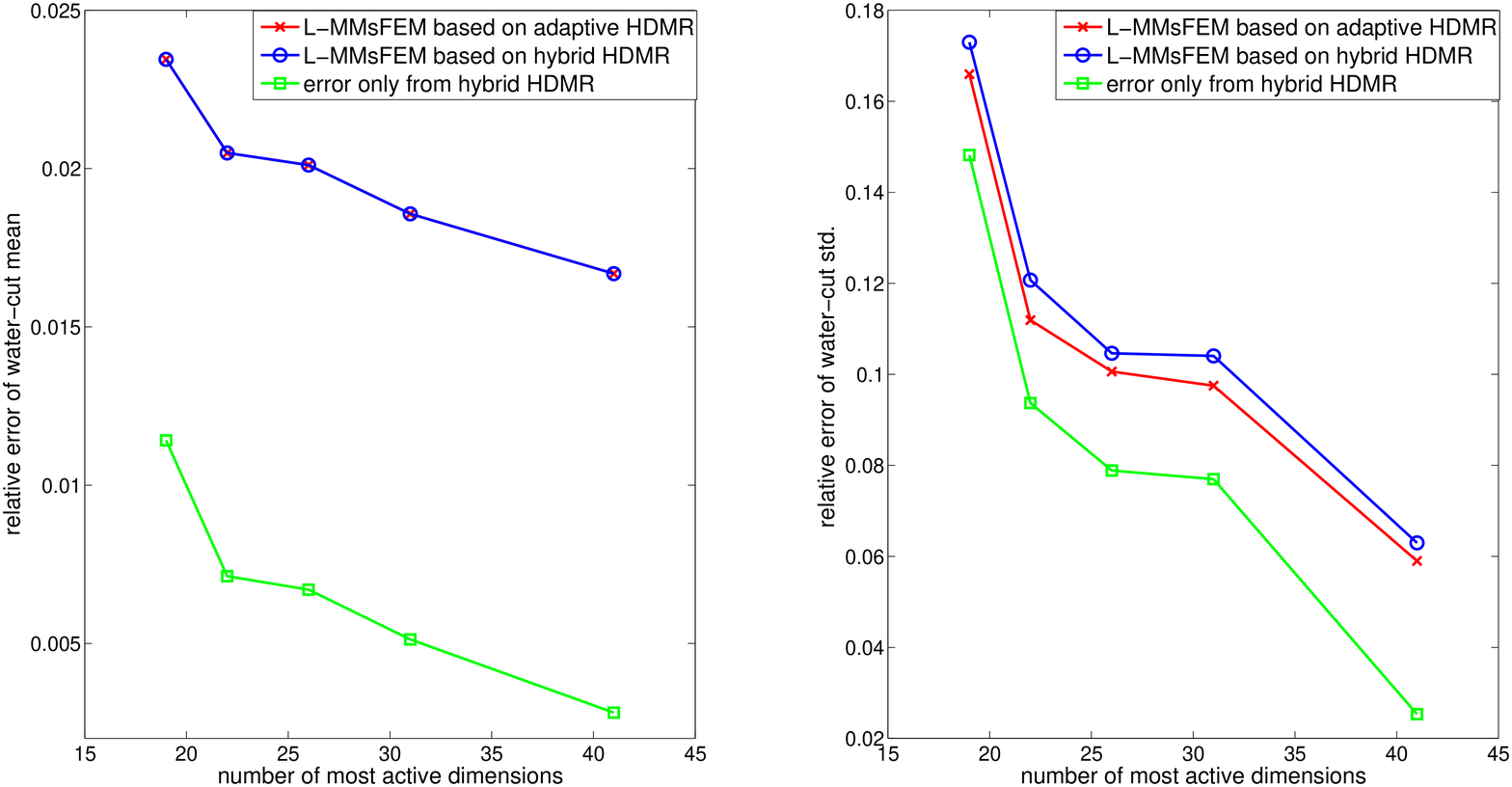}
\caption{Relative errors of mean (left) and standard deviation (right) for water-cut.}
\label{Fig1.8}
\end{figure}

\subsection{Non-linear transport}
In this subsection, we consider the two-phase flow system
(\ref{tp-system}) with the mobilities of water and oil defined by
nonlinear functions of saturation,
\[
\lambda_w(S)=S^2/\mu_w, \quad  \lambda_o(S)=(1-S)^2/\mu_o.
\]
Here $\mu_w/\mu_o=0.1$, the ratio between viscosity of water and
oil. Consequently, the fractional flow function $f_w(S)$ of water is given by
\[
f_w(S)=\frac{S^2}{S^2+0.1(1-S)^2}.
\]
This results in a non-linear two-phase flow system.

We again consider the random permeability
$k(x,\Theta)=\exp\big(a(x,\Theta)\big)$. Here the covariance function
$\mbox{cov}[a]$ of $a(x,\Theta)$ is defined in (\ref{exp-cov}) with
$l_x=l_y=0.2$ and $\sigma^2=1$.  The mean of $a(x,\Theta)$ is highly
heterogeneous and its map is depicted in Fig. \ref{Fig2.1} (left). It
is actually obtained by extracting and rescaling an SPE 10 \cite{cb01}
permeability field (the $45$-th layer).  We truncate the KLE
(\ref{KLE}) after the first $30$ terms to represent the random field
$a(x,\Theta)$ and so the random field $k(x, \Theta)$ is defined in a
$30$-dimensional random parameter space. Fig. $\ref{Fig2.1}$ shows a
realization of the random field $a(x, \Theta)$ (right).  The
stochastic field $k(x,\Theta)$ is defined on a $60\times 60$ fine
grid. We choose a $6\times 6$ coarse grid for MMsFEM to compute the
velocity.  To discretize the temporal variable of the saturation
equation, the time step is taken to be $0.01$ PVI.

Fig. \ref{Fig2.1} shows that the permeability field $k(x, \Theta)$
exhibits some channelized features, which have important an impact on
the flow.  To achieve accurate simulation results in this setting,
limited global information can improve the accuracy
\cite{aa,aej08,jem10}. In this subsection, we incorporate the hybrid HDMR
technique and the adaptive HDMR technique with both local MMsFEM
(L-MMsFEM) and global MMsFEM (G-MMsFEM), and compare the performance
of these methods. To obtain the most active random dimensions for the
truncated HDMR techniques, we choose a threshold constant $\zeta=0.9$
and use criteria (\ref{find-J}).  This gives rise to $14$ most
active dimensions in the $30$-dim random parameter space.  We use the
collocation points and weights associated with level $2$ Smolyak
sparse-grid collocation to compute the mean and standard deviation.

Fig. \ref{Fig2.2} shows the point-wise mean of saturations at $0.4$
PVI for different methods.  We can clearly see that G-MMsFEM on full
random dimensions yields the best approximation to the reference
solution, which is given by fine-scale MFEM on full random dimensions.
The figure also shows that the multiscale methods with the truncated
HDMR techniques provide good approximations.  Fig. \ref{Fig2.3}
describes the point-wise standard deviation of saturation at $0.4$ PVI
for those different methods. Based on the figure, we make two
observations: (1) the largest variance occurs around the water front;
(2) the hybrid HDMR technique gives almost the same standard deviation
map as the adaptive HDMR technique.  We note that these observations
are consistent with those of Section~\ref{sect-linear}.  We also
compute the water-cut curves for the different methods.  To minimize
overlap in the visualization, we only present the water-cut curves
for four methods: MFEM on full random dim., G-MMsFEM on full random dim.,
G-MMsFEM based on adaptive HDMR and G-MMsFEM based on hybrid HDMR.
Fig. \ref{Fig2.4} shows the mean of the water-cut curves for the four
methods. Here, the four curves overlap each other at almost all
times. We note that there exists a small fluctuation right after
the water breaks through.  The reason may be that the value
of water-cut changes sharply right after water break-through time.
The standard deviation of the water-cut curves for the four methods
are illustrated in Fig. \ref{Fig2.5}.  From the figure, we find that the
variance of water-cut rises rapidly at the break-through time,
and has a small number of peaks that are likely related to the
dominant channelized flow paths in the underlying mean permeability
field.

\begin{figure}[tbp]
\centering
\includegraphics[width=6in, height=2in]{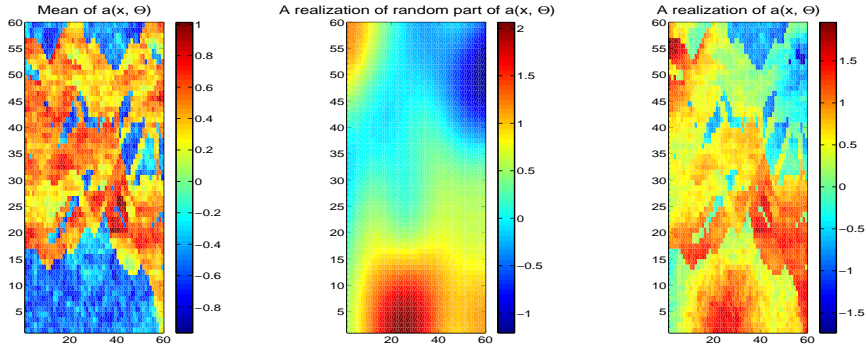}
\caption{A realization of random field $a(x, \Theta)$}
\label{Fig2.1}
\end{figure}

\begin{figure}[tbp]
\centering
\includegraphics[width=6in, height=3in]{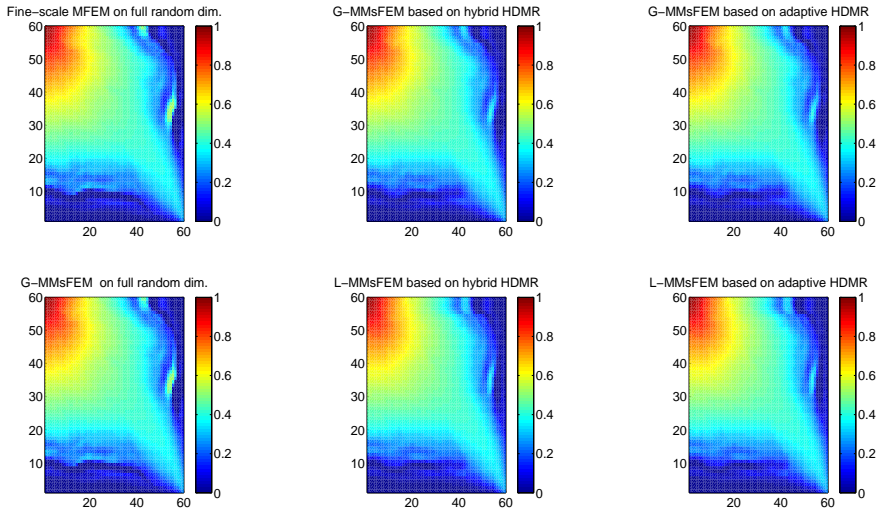}
\caption{Point-wise mean of saturations at $0.4$ PVI for different methods}
\label{Fig2.2}
\end{figure}

\begin{figure}[tbp]
\centering
\includegraphics[width=6in, height=3in]{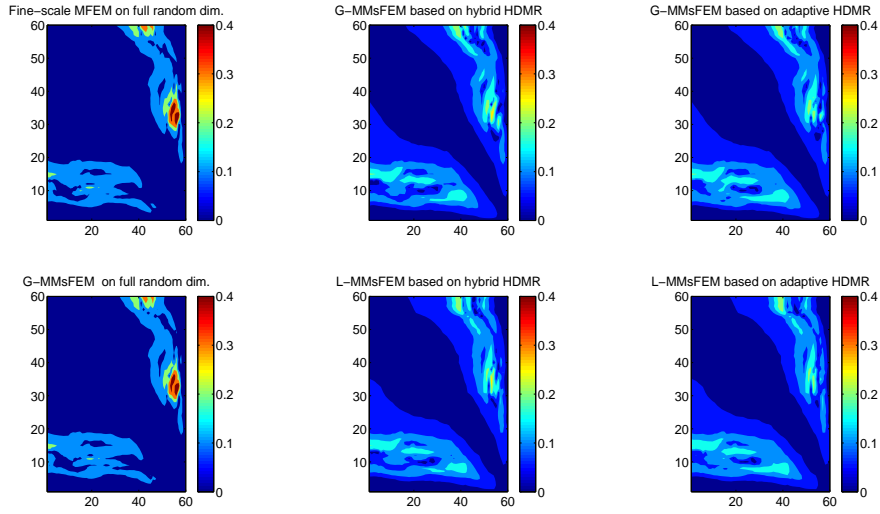}
\caption{Point-wise standard deviation of saturations at $0.4$ PVI for different methods}
\label{Fig2.3}
\end{figure}

\begin{figure}[tbp]
\centering
\includegraphics[width=5.5in, height=2in]{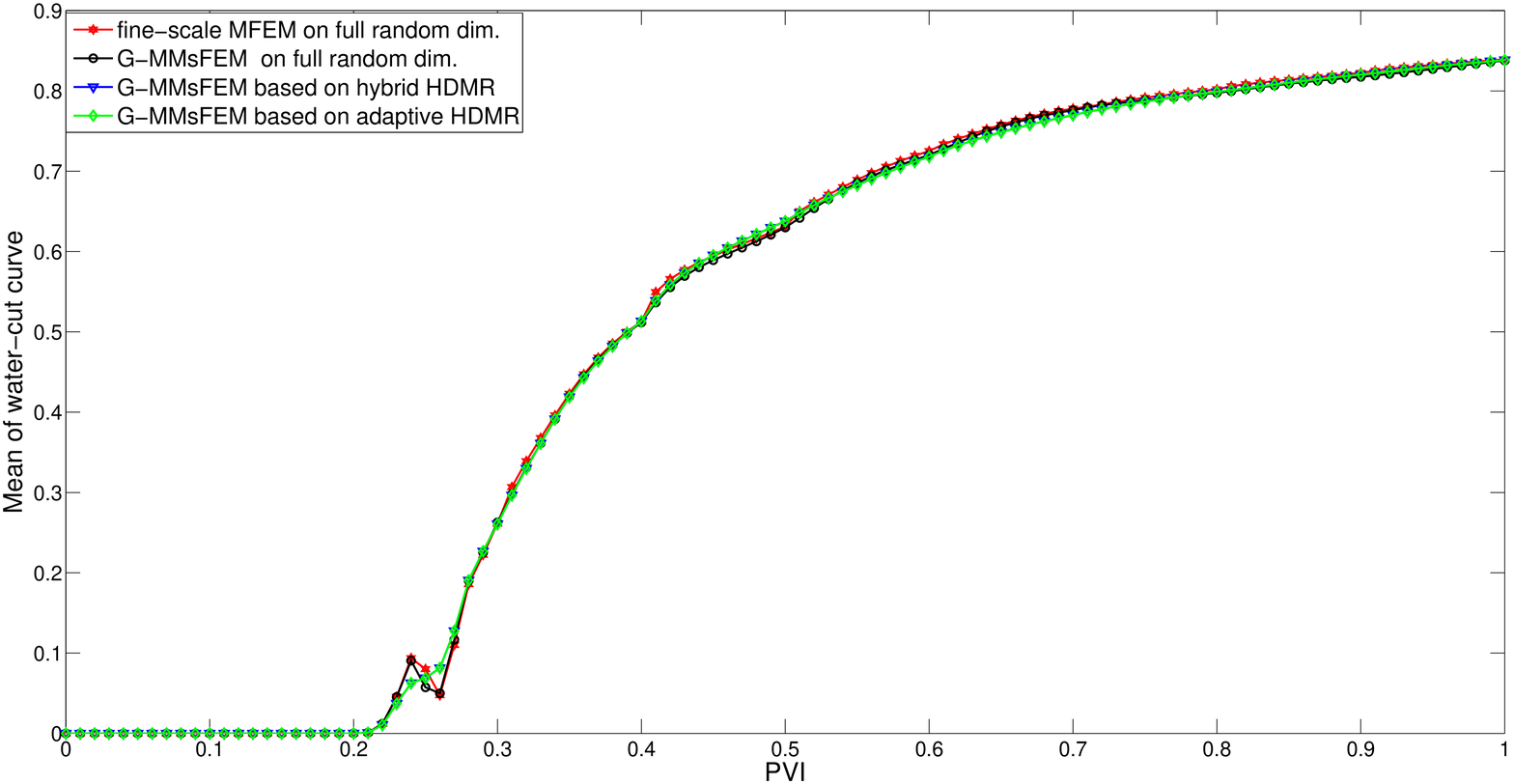}
\caption{Mean  of water-cut curves  for MFEM on full random dim., G-MMsFEM on full random dim., G-MMsFEM based on adaptive HDMR and G-MMsFEM based on hybrid HDMR}
\label{Fig2.4}
\end{figure}

\begin{figure}[tbp]
\centering
\includegraphics[width=5.5in, height=2in]{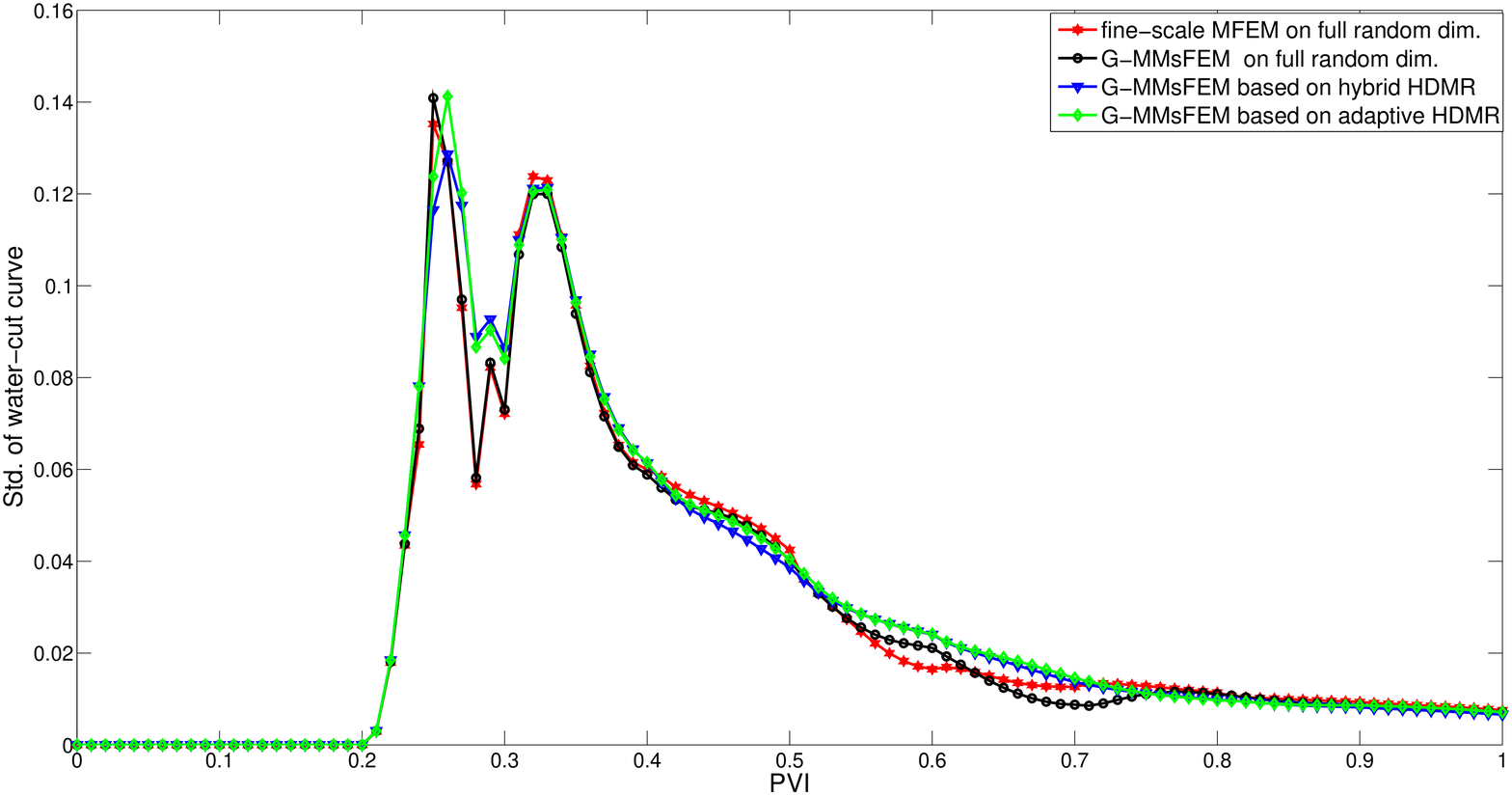}
\caption{Standard deviation  of water-cut curves  for MFEM on full random dim., G-MMsFEM on full random dim., G-MMsFEM based on adaptive HDMR and G-MMsFEM based on hybrid HDMR}
\label{Fig2.5}
\end{figure}

In order to carefully measure the differences caused by L-MMsFEM and
G-MMsFEM when integrated with adaptive HDMR and hybrid HDMR, we follow
the procedure outlined in Subsection \ref{sect-linear} and compute the
relative errors between the reference solution and the solutions of
the various coarse models (MMsFEM with (or without) HDMR techniques).
As before, the reference solution is solved by fine-scale MFEM on full
random dimensions.  The relative errors of saturation and water-cut
are defined similarly to (\ref{relative-error-S}) and
(\ref{relative-error-W}), respectively.  Table \ref{tab1-sect2} and
\ref{tab2-sect2} list the relative errors of mean and standard
deviation for saturation at $0.4$ PVI and water-cut,
respectively. Examining this table, we note: (1) limited global
information can enhance the accuracy of MMsFEM; (2) the results by
adaptive and hybrid HDMR are very close to each other.

\begin{table}[hbtp]
\centering \caption{relative errors of mean and std. on saturation (at 0.4 PVI) }
\begin{tabular}{|c|c|c|}
\hline
methods                    &    relative error of mean    & relative error of std.    \\
\hline
L-MMsFEM on full random dim.   &              4.170726e-002  &  1.545560e-001       \\
\hline
G-MMsFEM on full random dim.   &                 8.950902e-003  &  4.027147e-002        \\
\hline
L-MMsFEM based on adaptive HDMR &                 4.788991e-002  &  2.051337e-001         \\
\hline
G-MMsFEM based on adaptive HDMR &                3.283281e-002  &  1.608348e-001       \\
\hline
L-MMsFEM based on hybrid HDMR      &              4.788991e-002  &  2.013145e-001   \\
\hline
G-MMsFEM based on hybrid HDMR      &                3.283281e-002  &  1.520236e-001    \\
\hline
\end{tabular}
\label{tab1-sect2}
\end{table}

\begin{table}[hbtp]
\centering \caption{relative errors of mean and std. on water-cut}
\begin{tabular}{|c|c|c|}
\hline
methods                    &    relative error of mean    & relative error of std.    \\
\hline
L-MMsFEM on full random dim.   &      1.320466e-002  &  8.836354e-002                          \\
\hline
G-MMsFEM on full random dim.   &          8.000854e-003  &  4.298053e-002                       \\
\hline
L-MMsFEM based on adaptive HDMR &           1.423242e-002  &  1.352706e-001                    \\
\hline
G-MMsFEM based on adaptive HDMR &             1.162734e-002  &  1.212131e-001                  \\
\hline
L-MMsFEM based on hybrid HDMR      &             1.423242e-002  &  1.447519e-001                   \\
\hline
G-MMsFEM based on hybrid HDMR      &                1.162734e-002  &  1.248755e-001                \\
\hline
\end{tabular}
\label{tab2-sect2}
\end{table}

Finally we examine the efficiency for the different approaches. To
this end, we record the CPU time  for the seven different methods:
fine-scale MFEM on full random dimensions, L-MMsFEM on full random
dimensions, G-MMsFEM on full random dimensions, L-MMsFEM based on
adaptive HDMR, G-MMsFEM based on adaptive HDMR, L-MMsFEM based on
hybrid HDMR and G-MMsFEM based on hybrid HDMR.  Fig. \ref{Fig2.6}
shows the CPU time for the seven different approaches.  Here it is
apparent that the approaches based on hybrid HDMR need the least CPU
time and achieve the best efficiency.  The CPU time of hybrid HDMR is
only a fraction, about a third, of the CPU time of adaptive HDMR.
Moreover, this speedup was shown earlier to increase quickly with the
number of most active dimensions.  Comparing the performance of the
local and global MMsFEM based methods, we find that for each HDMR
technique, the CPU time  of G-MMsFEMs is  only slightly larger than
the CPU time of L-MMsFEM.  For adaptive HDMR, a significant amount of
CPU time is spent in computing variance.  However, computation of
the variance in hybrid HDMR is straightforward and requires very little
CPU time. This is an important advantage of the hybrid HDMR approach.

\begin{figure}[tbp]
\centering
\includegraphics[width=0.9\linewidth,height=3in]{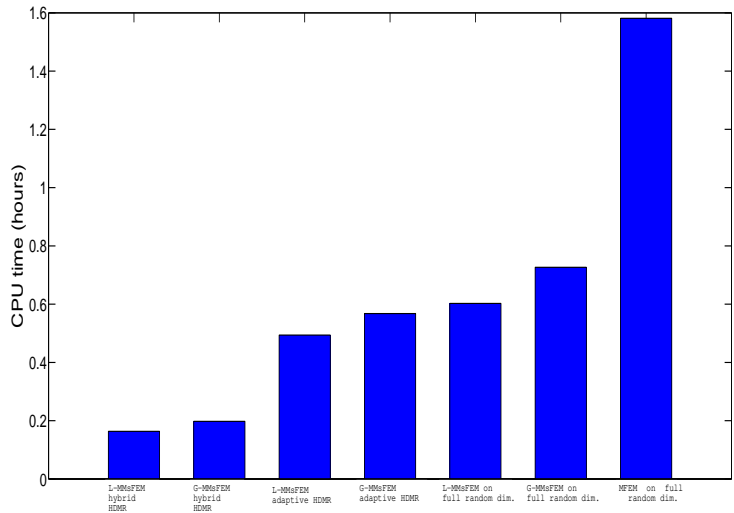}
\caption{CPU time for different approximation models, using 14 most active
  dimensions for the truncated HDMR techniques.}
\label{Fig2.6}
\end{figure}

\section{Conclusions}
In this paper, we presented a general framework for high-dimensional
model representations (HDMRs), and proposed a hybrid HDMR technique in
combination with a mixed multiscale finite element method (MMsFEM) to
simulate two-phase flow through heterogeneous porous media.  The hybrid
HDMR technique decomposes a high-dimensional stochastic model into a
moderate-dimensional stochastic model and a few one-dimensional
stochastic models.  An optimization criteria was developed that
ensures a specified percentage of the total variance is present in the
most active dimensions (i.e., the moderate-dimensional stochastic
model).  In addition, we demonstrated that combining an MMsFEM with
hybrid HDMR could significantly reduce the original model's complexity
in both the resolution of the physical space and the high-dimensional
stochastic space.  We also presented MMsFEM based on adaptive HDMR,
which has been widely used in stochastic model reduction.  Compared
with adaptive HDMR, the hybrid HDMR is much more efficient and retains
the same (or better) accuracy. To capture strong non-local features in
the multiscale models, we have incorporated important global
information into the multiscale computation. This can improve the
approximation accuracy of the proposed coarse multiscale models.  We
carefully analyzed the proposed MMsFEM using HDMR techniques and
discussed both the computational efficiency and approximation errors.
The MMsFEM based on HDMR techniques was applied to two-phase flows in
heterogeneous random porous media. Both linear and non-linear
saturation dependencies of the mobility were considered.  The
simulation results confirmed the performance of the proposed
approaches.

\section*{Acknowledgments}
We thank the reviewers for their insightful comments and suggestions
that helped improve the paper.

\appendix
\section{proof of Theorem  \ref{HDMR-thm3}}
\label{app1}
For the proof, we need to  calculate  the HDMR components in $f^{anova}(\Theta)$.  By definition, we derive the term $\hat{f}^{anova}(\Theta_{\mathcal{J}})$ as follows,
\begin{eqnarray}
\label{eq1-thm2.4}
\begin{split}
\hat{f}^{anova}(\Theta_{\mathcal{J}})&=\hmP_{\mathcal{J}} f^{cut}=\int_{I^{N-J}} f^{cut}(\Theta)\Pi_{i\notin \mathcal{J}} d\mu_i(\theta_i)\\
&=\int_{I^{N-J}} \hat{f}^{cut}(\Theta_{\mathcal{J}})\Pi_{i\notin \mathcal{J}} d\mu_i(\theta_i) +\sum_{i\in \{1,\cdots, N\}\setminus \mathcal{J}}\int_{I^{N-J}} f_i^{cut}(\theta_i)\Pi_{i\notin \mathcal{J}} d\mu_i(\theta_i)\\
&=\hat{f}^{cut}(\Theta_{\mathcal{J}})+\sum_{i\in \{1,\cdots, N\}\setminus \mathcal{J}} E[f_i^{cut}].
\end{split}
\end{eqnarray}
We define $f_0^{anova}=E[f^{cut}]$. Then for any $i\in \{1,\cdots, N\}\setminus \mathcal{J}$,  we have
\begin{eqnarray}
\label{eq2-thm2.4}
\begin{split}
&f_i^{anova}(\theta_i)=\int_{I^{N-1}} f^{cut}(\Theta)  \Pi^N_{\substack{s=1\\ s\neq i}} d\mu_s(\theta_s)-f_0^{anova}\\
&=\int_{I^{N-1}} \hat{f}^{cut}(\Theta_{\mathcal{J}})  \Pi^N_{\substack{s=1\\ s\neq i}} d\mu_s(\theta_s)
+\sum_{j\in \{1,\cdots, N\}\setminus \mathcal{J}}
\int_{I^{N-1}} f_j^{cut}(\theta_j)  \Pi^N_{\substack{s=1\\ s\neq i}} d\mu_s(\theta_s)-f_0^{anova}\\
&=f_i^{cut}(\theta_i)+  E[\hat{f}^{cut}]+\sum_{\substack{j\in \{1,\cdots, N\}\setminus \mathcal{J}\\ j\neq i}}E[f_j^{cut}]-f_0^{anova}\\
&:=f_i^{cut}(\theta_i)+C_i.
\end{split}
\end{eqnarray}
It is obvious that $E[f_i^{anova}]=0$  by (\ref{eq2-thm2.4}).  Consequently, it follows   that  by (\ref{eq1-thm2.4}),
\[
E[f^{anova}]=E[\hat{f}^{anova}]=E[\hat{f}^{cut}]+\sum_{i\in \{1,\cdots, N\}\setminus \mathcal{J}} E[f_i^{cut}]=E[f^{cut}].
\]
This proves the equality (\ref{mean-eq}). For ANOVA-HDMR, we note that
\begin{eqnarray}
\label{eq3-thm2.4}
\begin{split}
\mbox{Var}[f^{anova}]&=\|f^{anova}\|^2_{\mathcal{F}}=\|\hat{f}^{anova}\|_{\mathcal{F}}^2+\sum_{i\in \{1,\cdots, N\}\setminus \mathcal{J}}\|f_i^{anova}\|_{\mathcal{F}}^2\\
&=\mbox{Var}[\hat{f}^{anova}]+\sum_{i\in \{1,\cdots, N\}\setminus \mathcal{J}}\mbox{Var}[f_i^{anova}]\\
&=\mbox{Var}[\hat{f}^{cut}]+\sum_{i\in \{1,\cdots, N\}\setminus \mathcal{J}}\mbox{Var}[f_i^{cut}],
\end{split}
\end{eqnarray}
where  equations (\ref{eq1-thm2.4}) and (\ref{eq2-thm2.4}) have been used in the last step.
Further, by equations (\ref{eq1-thm2.4}) and (\ref{eq2-thm2.4}) we have
\begin{eqnarray}
\begin{split}
f^{cut}(\Theta)&=\hat{f}^{cut}(\Theta_{\mathcal{J}})+\sum_{i\in \{1,\cdots, N\}\setminus \mathcal{J}} f_i^{cut}(\theta_i)\\
&=\hat{f}^{anova}(\Theta_{\mathcal{J}})-\sum_{i\in \{1,\cdots, N\}\setminus \mathcal{J}} E[f_i^{cut}]
+ \sum_{i\in \{1,\cdots, N\}\setminus \mathcal{J}}\big(f_i^{anova}(\theta_i)-C_i\big)\\
&=f^{anova}(\Theta)-\bigg(\sum_{i\in \{1,\cdots, N\}\setminus \mathcal{J}} \big(E[f_i^{cut}]+C_i\big)\bigg).
\end{split}
\end{eqnarray}
This implies that $\mbox{Var}[f^{cut}]=\mbox{Var}[f^{anova}]$. Hence the proof is completed.


\end{document}